\newcounter{maincounter}
\numberwithin{maincounter}{section}
\numberwithin{equation}{section}
\newtheorem{lemma}[maincounter]{Lemma}
\newtheorem{proposition}[maincounter]{Proposition}
\newtheorem{remark}[maincounter]{Remark}
\newtheorem{theorem}[maincounter]{Theorem}
\newtheorem{conjecture}[maincounter]{Conjecture}
\def\AA{\mathbb{A}}
\def\NN{\mathbb{N}}
\def\RR{\mathbb{R}}
\def\CC{\mathbb{C}}
\def\PP{\mathbb{P}}
\def\QQ{\mathbb{Q}}
\newcommand{\cal}{\mathcal}
\newcommand{\cA}{\cal{A}}
\newcommand{\IP}{{\PP}}
\newcommand{\IC}{{\CC}}
\newcommand{\IR}{{\RR}}
\newcommand{\IQbar}{{\overline{\QQ}}}
\newcommand{\IN}{{\NN}}
\newcommand{\IA}{{\AA}}
\newcommand{\IQ}{{\QQ}}
\newcommand{\cL}{{\mathcal L}}
\newcommand{\cM}{{\mathcal M}}
\newcommand{\cO}{{\mathcal O}}
\newcommand{\ssm}{\setminus}
\newcommand{\codim}{{\rm codim}}
\renewcommand{\subset}{\subseteq} %%% Some people think \subset
\newcommand{\jac}[1]{\mathrm{Jac}({#1})}
\newcommand{\pullbackcorner}[1][dr]{\save*!/#1-1.7pc/#1:(-1.5,1.5)@^{|-}\restore}
\DeclareMathOperator{\rk}{rk}
\begin{document}
\title{A consequence of the Relative Bogomolov Conjecture}
\author{Vesselin Dimitrov}
\author{Ziyang Gao}
\author{Philipp Habegger}

\address{Department of Pure Mathematics and Mathematical Statistics, Centre for Mathematical Sciences, Wilberforce Road, Cambridge CB3 0WA, UK}
\email{vesselin.dimitrov@gmail.com}
\address{CNRS, IMJ-PRG, 4 place de Jussieu, 75005 Paris, France}
\email{ziyang.gao@imj-prg.fr}
\address{Department of Mathematics and Computer Science, University of Basel, Spiegelgasse 1, 4051 Basel, Switzerland}
\email{philipp.habegger@unibas.ch}

\subjclass[2000]{11G30, 11G50, 14G05, 14G25}

\maketitle
\begin{abstract} We propose a formulation of the relative Bogomolov conjecture and show that it gives an affirmative answer to a question of Mazur's concerning the uniformity of the Mordell--Lang conjecture for curves. In particular we show that the relative Bogomolov conjecture implies the uniform Manin--Mumford conjecture for curves. The proof is built up on our previous work \cite{DGHUnifML}.
\end{abstract}
\tableofcontents

\section{Introduction}

Let $F$ be a field of characteristic $0$. A smooth curve $C$ defined
over $F$ is a geometrically irreducible, smooth, projective curve
defined over $F$. We denote by $\mathrm{Jac}(C)$ the Jacobian of $C$.
The following conjecture is a question posed by Mazur \cite[top of page~234]{mazur1986arithmetic}.

\begin{conjecture}\label{ConjMazur}
Let $g \ge 2$ be an integer. Then there exists a constant $c(g) \ge 1$ with the following property. Let $C$ be a smooth curve of genus $g$ defined over $F$, let $P_0 \in C(F)$, and let $\Gamma$ be a subgroup of $\mathrm{Jac}(C)(F)$ of finite rank $\rk(\Gamma)$. Then
\begin{equation}\label{EqBoundMazur}
\#(C(F)-P_0)\cap\Gamma \le c(g)^{1+\rk(\Gamma)}
\end{equation}
where $C - P_0$ is viewed as a curve in $\mathrm{Jac}(C)$ via the Abel--Jacobi map based at $P_0$.
\end{conjecture}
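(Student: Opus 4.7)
The plan is to combine the uniform height-based inequalities of our previous work \cite{DGHUnifML} with the Relative Bogomolov Conjecture (RBC) proposed in the present paper. The role of RBC will be precisely to supply the uniform positive lower bound on essential minima that converts the conditional estimates of \cite{DGHUnifML} into the unconditional inequality \eqref{EqBoundMazur}.

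First I would reduce to the case $F = \overline{\QQ}$. A standard specialization argument — choose a finitely generated $\ZZ$-subalgebra $R \subset F$ over which $(C, P_0)$ and a finite set of generators of a finite-rank group containing $\Gamma$ are defined, then specialize $R \to \overline{\QQ}$ — shows that the rank of $\Gamma$ does not grow while the cardinality on the left of \eqref{EqBoundMazur} does not shrink. Hence a bound uniform in $(C, P_0, \Gamma)$ over $\overline{\QQ}$ is enough.

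Next, fix $(C, P_0)$ and $\Gamma$ over $\overline{\QQ}$, write $X := C - P_0 \subset J := \jac{C}$, and let $\widehat{h}$ denote the Néron--Tate height on $J$ attached to a canonical symmetric polarization (for instance twice the theta divisor). Choose a threshold $\epsilon = \epsilon(g) > 0$, to be specified below, and partition $X(\overline{\QQ}) \cap \Gamma$ into the two parts $X_{\ge \epsilon}$ and $X_{<\epsilon}$ according to whether $\widehat{h}(P) \ge \epsilon$ or not. For $X_{\ge \epsilon}$, Mumford's gap principle applied in $\Gamma \otimes_\ZZ \RR$ endowed with the Néron--Tate bilinear form, combined with the main height inequality of \cite{DGHUnifML}, produces $\# X_{\ge \epsilon} \le c_1(g, \epsilon)^{1 + \rk(\Gamma)}$ with a constant $c_1$ uniform in $(C, P_0, \Gamma)$. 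For $X_{<\epsilon}$, RBC yields a constant $\epsilon_0(g) > 0$ — depending only on $g$ — such that the essential minimum of $X$ in $J$ with respect to $\widehat{h}$ is at least $\epsilon_0(g)$. Taking $\epsilon := \epsilon_0(g)$ confines $X_{<\epsilon}$ to a proper Zariski-closed subset of $X$, hence to a finite set of size $\le c_2(g)$.

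Adding the two estimates and absorbing $c_2(g)$ into the base produces \eqref{EqBoundMazur}. The central obstacle is the large-height bound: ensuring that $c_1(g, \epsilon)$ stays under control as $\epsilon \to 0$, and that the Mumford-style covering argument is uniform across the moduli of $(C, P_0)$. This is precisely where the ``new gap principle'' of \cite{DGHUnifML} enters, and its quantification — together with the extraction from RBC of a lower bound on the essential minimum that depends only on the genus — constitutes the heart of the argument. A secondary technical point is to verify that the version of RBC formulated in this paper does in fact produce an $\epsilon_0(g)$ independent of $(C, P_0)$ rather than one sensitive to the Faltings height of $C$; this demands that RBC be phrased in a form that accommodates not only a single abelian variety but the whole relative curve $\mathcal{C} - P_0$ inside its Jacobian scheme over the moduli base.
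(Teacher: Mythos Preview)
Your outline has the right ingredients but two genuine gaps, and what you flag as a ``secondary technical point'' is in fact the main obstruction. First, RBC as formulated in Conjecture~\ref{ConjRelBog} cannot be applied to the relative curve $\mathcal{C}-P_0$ inside its Jacobian over the moduli of pointed curves: the hypothesis $\codim_{\cA}X>\dim S$ fails, since the relative codimension is $g-1$ while the base has dimension $3g-2$. So RBC does \emph{not} directly produce a uniform $\epsilon_0(g)$ bounding the essential minimum of $C-P_0$ from below. The paper sidesteps this by passing to the $M$-th Faltings--Zhang image $\mathscr{D}_M(\mathfrak{C}_S^{[M+1]})\subset\mathfrak{A}_g^{[M]}\times_{\IA_g}S$ with $M\ge 3g-1$, where the codimension is $(g-1)M-1>\dim S$ and RBC applies; this device is absent from your proposal.

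Second, your claim that Mumford's gap plus the height inequality of \cite{DGHUnifML} already bounds $\#X_{\ge\epsilon}$ uniformly in $(C,P_0)$ is exactly what \cite{DGHUnifML} could \emph{not} achieve for curves of small modular height: the additive constant $c'$ in $\hat h_{\cL}(P)\ge c\,h_{\overline{\IA_g}}(\pi(P))-c'$ ruins the ball-packing ratio $R/r$ unless $h_{\overline{\IA_g}}(\tau(s))$ is large. In the paper RBC is not used to bound small points on $C-P_0$ at all; it is used on the Faltings--Zhang image to upgrade the height inequality to $\hat h_{\cL}(P)\ge c\max\{1,h_{\overline{\IA_g}}(\pi(P))\}$ on a dense open (Proposition~\ref{PropHeightInequalitystrengthend}). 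This strengthened inequality makes the dichotomy of Proposition~\ref{PropNTDistance} valid for \emph{every} $s$, and the large/small split is then made at the curve-dependent threshold $c\max\{1,h_{\overline{\IA_g}}(\tau(s))\}$ (so that $R/r$ is a constant independent of $s$), not at a fixed $\epsilon(g)$.
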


When $F = \IQbar$, based on Vojta's method, R\'{e}mond \cite{Remond:Decompte} has proved an explicit upper bound of $\#(C(\IQbar)-P_0)\cap\Gamma$. Apart from $g$ and $\mathrm{rk}(\Gamma)$, R\'{e}mond's bound depends also on a suitable height of $\mathrm{Jac}(C)$.

Two particular consequences of Conjecture~\ref{ConjMazur} are:
\begin{enumerate}
\item[(i)] Take $F$ a number field and $\Gamma = \mathrm{Jac}(C)(F)$. By the Mordell--Weil Theorem $\mathrm{Jac}(C)(F)$ is a finitely generated abelian group. Then \eqref{EqBoundMazur} becomes a bound on the number of rational points $\#C(F) \le c(g)^{1+\rk\mathrm{Jac}(C)(F)}$.
\item[(ii)] If $F = \IC$ and $\Gamma = \mathrm{Jac}(C)_{\mathrm{tor}}$, then \eqref{EqBoundMazur} becomes $\#(C(\IC)-P_0)\cap  \mathrm{Jac}(C)_{\mathrm{tor}} \le c(g)$, the uniform Manin--Mumford conjecture for curves in their Jacobians.
\end{enumerate}

In a recent work, we proved \eqref{EqBoundMazur} provided that the modular height of the curve in question is large in terms of $g$; see  \cite[Theorem~1.2]{DGHUnifML}. Prior to our work and for torsion points, \textit{i.e.} $F = \IC$ and $\Gamma = \mathrm{Jac}(C)_{\mathrm{tor}}$, the desired bound \eqref{EqBoundMazur} was proved by DeMarco--Krieger--Ye \cite{DeMarcoKriegerYeUniManinMumford} for any genus $2$ curve admitting a degree-two map to an elliptic curve when the Abel--Jacobi map is based at a Weierstrass point.

\vskip 0.5em

The goal of this note is to give a precise statement for the folklore \textit{relative Bogomolov conjecture}, and prove that it implies the full Conjecture~\ref{ConjMazur}.

\subsection{The Relative Bogomolov Conjecture}
We start by proposing a formulation for the relative Bogomolov conjecture. 

Let $S$ be a regular, irreducible, quasi-projective variety. Let $\pi \colon \cA \rightarrow S$ be an abelian scheme
of relative dimension $g \ge 1$, namely a proper smooth group scheme whose fibers are abelian varieties. Let $\cL$ be a symmetric relatively
ample line bundle on $\cA/S$. Assume that $S$, $\pi$ and $\cL$ are all defined over $\IQbar$. % defined over $\IQbar$.

For each $s \in S(\IQbar)$, the line bundle $\cL_s$ on the abelian
variety $\cA_s = \pi^{-1}(s)$ is symmetric and ample; note that $\cA_s$ is defined over $\IQbar$. Tate's Limit Process provides a fiberwise N\'{e}ron--Tate height $\hat{h}_{\cA_s,\cL_s} \colon \cA_s(\IQbar) \rightarrow [0,\infty)$; it vanishes precisely on the torsion points in $\cA_s(\IQbar)$. Finally define $\hat{h}_{\cL} \colon \cA(\IQbar) \rightarrow [0,\infty)$ to be $P \mapsto \hat{h}_{\cA_{\pi(P)},\cL_{\pi(P)}}(P)$.

Let $\eta$ be the generic point of $S$ and fix an algebraic closure of
the function field of $S$. For any subvariety $X$ of
$\cA$ that
dominates $S$, denote by $X_{\overline{\eta}}$ the geometric generic
fiber of $X$.
In particular, $\cA_{\overline{\eta}}$ is an abelian variety  over an
algebraically closed field.

\begin{conjecture}[Relative Bogomlov Conjecture]
  \label{ConjRelBog}
  Let $X$ be an irreducible subvariety of $\cA$ defined over
  $\overline{\IQ}$ that dominates $S$. Assume that
  $X_{\overline{\eta}}$ is irreducible
  % each irreducible
  % component of
  and not contained in any proper
  algebraic subgroup of $\cA_{\overline{\eta}}$. If $\codim_{\cA} X >
  \dim S$, then there exists $\epsilon > 0$ such that
  \[ X(\epsilon; \cL) := \{ x \in X(\IQbar) : \hat{h}_{\cL}(x) \le
    \epsilon \}
  \]
  is not Zariski dense in $X$.
\end{conjecture}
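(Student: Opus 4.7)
The plan is to adapt the Ullmo--Zhang proof of the classical Bogomolov conjecture to the relative setting, making essential use of the tools developed in \cite{DGHUnifML}. I argue for the contrapositive: assume that $X(\epsilon;\cL)$ is Zariski dense in $X$ for every $\epsilon > 0$, and aim to contradict the hypothesis that $X_{\overline{\eta}}$ is not contained in a proper algebraic subgroup of $\cA_{\overline{\eta}}$.

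The first step would be to invoke the height inequality central to \cite{DGHUnifML}, which relates the fiberwise N\'eron--Tate height $\hat{h}_{\cL}$ to a natural height on the base $S$ along the Betti-generic locus. The dimension hypothesis $\codim_{\cA} X > \dim S$ is precisely what ensures that the restriction of the Betti map to the smooth locus of $X$ is submersive on a Zariski open dense subset; this is the setup from \emph{loc.~cit.}, and it guarantees that one may meaningfully pass between the two heights. In particular, a hypothetical Zariski-dense set of points of small $\hat{h}_\cL$ in $X$ cannot all escape into the complement of the Betti-generic locus.

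Next, I would combine this inequality with an equidistribution argument. The classical Yuan--Szpiro--Ullmo--Zhang equidistribution operates fiberwise: for each $s \in S(\overline{\IQ})$ for which $X_s(\epsilon;\cL_s)$ is dense in $X_s$, one obtains that $X_s$ is a torsion subvariety of $\cA_s$. The crux is to propagate such a fiberwise conclusion to the geometric generic fiber. A natural route is an equidistribution statement in families in the spirit of recent work of K\"uhne: one would produce a limit current on the Berkovich analytification of $\cA$ that forces $X_{\overline{\eta}}$ to be a translate of an abelian subvariety by a torsion section, contradicting the non-degeneracy assumption on $X_{\overline{\eta}}$. Functional transcendence statements of Ax--Schanuel type for mixed Shimura varieties, applied after pulling back via the moduli map for $\cA/S$, would be used to exclude that the resulting limit object is a proper algebraic subgroup of $\cA_{\overline\eta}$.

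The main obstacle lies exactly in this propagation step. One needs to control the behaviour of small points at archimedean and non-archimedean places \emph{simultaneously}, and in particular to rule out concentration of the small-point sequence near the boundary of $S$, where fibers of $\cA$ can degenerate. A delicate input is the compactness of a suitable space of currents parametrised by $s$, together with non-archimedean equidistribution \`a la Chambert-Loir at each place. Assembling these ingredients uniformly over $S$, rather than fibre-by-fibre, appears to be the genuine content of the conjecture and likely requires a new technical device beyond what is presently available in \cite{DGHUnifML}.
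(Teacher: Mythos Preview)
The statement you are attempting to prove is \emph{Conjecture}~\ref{ConjRelBog} in the paper, and the paper does not prove it in general; it is explicitly declared open (``In general Conjecture~\ref{ConjRelBog} is still open''). The only case the paper establishes is the isotrivial case (Section~4), and there the argument is quite different from your sketch: after a finite base change one has $\cA' = A\times S'$, one projects $X'$ to $A$ via $p\colon A\times S'\to A$, compares $\hat h_{\cL}$ with $\hat h_{p^*L}$ for an ample symmetric $L$ on $A$, and then applies S.~Zhang's classical Bogomolov theorem on $A$ to force $p(X')$ to be all of $A$, contradicting $\codim_{\cA} X>\dim S$. No Betti map, no equidistribution in families, no Ax--Schanuel is used.

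Your proposal is not a proof but a heuristic outline, and you yourself flag this at the end (``likely requires a new technical device beyond what is presently available''). A few of the intermediate claims are also imprecise. The assertion that $\codim_{\cA} X>\dim S$ ``is precisely what ensures that the restriction of the Betti map \ldots\ is submersive on a Zariski open dense subset'' conflates two distinct conditions: in \cite{DGHUnifML} the height inequality requires \emph{non-degeneracy} of $X$ (generic maximal rank of the Betti map), which is a geometric condition on $X$ relative to the bi-algebraic structure, not simply a codimension count; the codimension hypothesis in Conjecture~\ref{ConjRelBog} serves a different purpose (making the conclusion non-vacuous). Likewise, the step ``for each $s$ with $X_s(\epsilon;\cL_s)$ dense in $X_s$ one obtains that $X_s$ is torsion'' already presupposes that such $s$ exist in abundance, which is exactly what is at stake; Zariski density of $X(\epsilon;\cL)$ in $X$ does not by itself produce a Zariski dense set of $s$ with $X_s(\epsilon;\cL_s)$ dense in $X_s$. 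The ``propagation to the geometric generic fiber'' that you identify as the crux is indeed the whole difficulty, and nothing in your outline bridges it.

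In short: there is no proof in the paper to compare against (beyond the isotrivial reduction), and your proposal is a strategy sketch with an acknowledged gap rather than a proof.
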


The name \textit{Relative Bogomolov Conjecture} is reasonable: the
same statement with $\epsilon = 0$ is precisely the relative
Manin--Mumford conjecture proposed by Pink~\cite[Conjecture 6.2]{Pink}
and  Zannier~\cite{ZannierBook}, which is proved when
% $\dim S = 1$
$\dim X=1$ in a series of papers \cite{MasserZannierTorsionPointOnSqEC, MASSER2014116, MasserZannierRelMMSimpleSur, CorvajaMasserZannier2018, MasserZannierRMMoverCurve}.

The classical Bogomolov conjecture, proved by Ullmo \cite{Ullmo} and
S.~Zhang \cite{ZhangEquidist}, is precisely
Conjecture~\ref{ConjRelBog} for $\dim S = 0$. When $\dim S = 1$ and
$X$ is the image of a section, Conjecture~\ref{ConjRelBog} is
equivalent to  S.~Zhang's conjecture in his 1998 ICM note
\cite[$\mathsection$4]{zhang1998small} if $\cA_{\overline{\eta}}$ is
simple and is proved by DeMarco--Mavraki
\cite[Theorem~1.4]{DeMarcoMavraki} if $\cA \rightarrow S$ is isogenous
to a fiber product of elliptic surfaces. In general Conjecture~\ref{ConjRelBog} is still open.

After this paper is completed, K\"{u}hne \cite{KuehneRBC} proved the Relative Bogomolov Conjecture for arbitrary subvarieties of a fiber product of elliptic surfaces.

\subsection{Main result}
Our main result, which is built up on \cite{DGHUnifML}, is the following theorem.

%\philipp{Move this result below the relative Bogomolov Conjecture  towards the end of  subsection 1.1.}
\begin{theorem}\label{MainThm}
 Assume that the Relative Bogomolov Conjecture, \textit{i.e.}, Conjecture~\ref{ConjRelBog}, holds true. Then Conjecture~\ref{ConjMazur} holds true.
\end{theorem}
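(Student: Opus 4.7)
The plan combines the height inequality from our previous paper \cite{DGHUnifML}, which handles the contribution from points of large N\'eron--Tate height, with Conjecture~\ref{ConjRelBog}, which is to provide a uniform control of the small-height points across all $(C,P_0)$. A preliminary step reduces Conjecture~\ref{ConjMazur} to the case $F=\IQbar$: applying Hilbert irreducibility to a relative version of the conjecture, together with Silverman's theorem on the variation of Mordell--Weil ranks in families, one descends from a finitely generated field of definition for $(C,P_0,\Gamma)$ to a number field, preserving the rank of $\Gamma$ and the genus of~$C$.

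The Vojta--Mumford style inequality underlying \cite[Theorem~1.2]{DGHUnifML} in fact yields more than its ``large modular height'' statement: independently of $h_{\mathrm{mod}}(C)$, it bounds by $c_1(g)^{1+\rk\Gamma}$ the number of points $P\in(C-P_0)\cap\Gamma$ whose fibrewise N\'eron--Tate height $\hat h(P-P_0)$ exceeds a threshold of the form $\epsilon_1(g)\cdot\max_P \hat h(P-P_0)$. The remaining task is then to bound, uniformly in $(C,P_0)$, the number of ``small-height'' points of $(C-P_0)\cap\Gamma$.

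To this end I would set up the universal pointed curve $\pi:\cC\to S$ over a finite cover $S$ of $\mathcal{M}_{g,1}$ (with enough level structure to guarantee representability), its universal Jacobian $\cJ\to S$, and the relative Abel--Jacobi embedding $\iota:\cC\hookrightarrow\cJ$ at the marked section. For an integer $M=M(g)$ chosen large enough, let $\cA$ be the $M$-fold fibre self-product of $\cJ$ over $S$, equipped with the natural symmetric relatively ample line bundle, and let $X\subset\cA$ be the corresponding $M$-fold fibre self-product of $\iota(\cC)$. Then $X$ has relative dimension $M$ over $S$, so $\codim_\cA X=M(g-1)$; the choice $M>(3g-2)/(g-1)$ makes this exceed $\dim S=3g-2$. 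The geometric generic fibre $X_{\bar\eta}=C_{\bar\eta}^M$ is irreducible and, because the generic curve of genus $g\ge 2$ has simple Jacobian with Abel--Jacobi image generating $\jac{C_{\bar\eta}}$, it is not contained in any proper algebraic subgroup of $\cA_{\bar\eta}$. Conjecture~\ref{ConjRelBog} therefore produces $\epsilon_0=\epsilon_0(g)>0$ and a proper Zariski closed $Y\subsetneq X$ containing the small-height locus $X(\epsilon_0;\cL)$.

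The final step is to translate this geometric non-density into a uniform fibrewise count. Were $(C_s-P_{0,s})\cap\Gamma$ to contain $N$ small-height points in a fibre $s\in S(\IQbar)$, their $M$-fold products would produce $N^M$ small-height points of $X_s$, necessarily lying in $Y_s\subsetneq C_s^M$; a bound on $N$ uniform in $s$ then follows by Noetherian induction along the stratification of $S$ by the irreducible components of $Y$, re-applying the same construction inside each stratum of strictly smaller dimension. Combined with the large-height bound of the second paragraph, this produces the exponential bound \eqref{EqBoundMazur}. The principal obstacle is precisely this descent from the total space to fibres: one must ensure that at each stage of the induction the combinatorial loss is absorbed into a single constant depending only on $g$, so that the total count retains the exponential dependence $c(g)^{1+\rk\Gamma}$ rather than accumulating factors growing with $\dim S$ or with the depth of the induction.
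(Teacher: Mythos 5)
Your overall architecture --- height inequality from \cite{DGHUnifML} for large points, Relative Bogomolov for small points, specialization to reduce to $F=\IQbar$ --- is the right one, but the way you deploy Conjecture~\ref{ConjRelBog} differs from the paper's and leaves a genuine gap. The conjecture hands you a single absolute constant $\epsilon_0=\epsilon_0(g)>0$ and confines the points of N\'eron--Tate height $\le\epsilon_0$ to a proper closed subset $Y\subsetneq X$. On the other side, the R\'emond/Vojta--Mumford count of ``large'' points only kicks in above a threshold proportional to a height of the Jacobian, i.e.\ $c(g)\max\{1,h_{\overline{\mathbb{A}_g}}(\tau(s))\}$ (your stated threshold $\epsilon_1(g)\max_P\hat h(P-P_0)$ is not what those inequalities give). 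For a curve of large modular height there is then a wide intermediate range $\epsilon_0/M<\hat h(P-P_0)\le c\max\{1,h_{\overline{\mathbb{A}_g}}(\tau(s))\}$ that neither half of your argument touches. Relatedly, your small-point count produces an absolute bound $N\le c(g)$ with no dependence on $\rk(\Gamma)$, which cannot be correct for the full range up to the R\'emond threshold: one needs a packing argument in $\Gamma\otimes\IR$, covering the ball of radius $R=(c\max\{1,h\})^{1/2}$ by balls of radius $r$, and the count $(1+2R/r)^{\rho}$ is only uniform in $s$ if $r^2$ also scales like $\max\{1,h_{\overline{\mathbb{A}_g}}(\tau(s))\}$ --- precisely what an absolute $\epsilon_0$ fails to provide. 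This is why the paper does not apply Conjecture~\ref{ConjRelBog} to $C^M$ directly but to the Faltings--Zhang image $\mathscr{D}_M(\mathfrak{C}_S^{[M+1]})$, and uses it only to delete the additive constant $-c'$ from the inequality \eqref{EqHeightInequalityDGH}: combining $\hat h_{\cL}\ge\epsilon$ off $\overline{X(\epsilon;\cL)}^{\mathrm{Zar}}$ with $\hat h_{\cL}\ge ch-c'$ on $U$ yields $\hat h_{\cL}\ge c''\max\{1,h_{\overline{\mathbb{A}_g}}(\pi(P))\}$ (Proposition~\ref{PropHeightInequalitystrengthend}), and from this the gap principle of Proposition~\ref{PropNTDistance} has a radius that scales correctly, making $R/r$ independent of $s$.

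Two further points. First, working with $C^M$ based at the marked section rather than with the difference variety $\{(Q_1-P_0,\ldots,Q_M-P_0)\}$ ties you to the marked point of $\mathbb{M}_{g,1}$; the paper's gap principle must hold with $P$ ranging over all of $\mathfrak{C}_s(\IQbar)$ outside a uniformly bounded exceptional set $\Xi_s$ (handled via \cite[Lemma~6.4]{DGHUnifML}), and the passage from the fixed base point $P_s$ supplied by R\'emond to an arbitrary $P_0$ costs one extra rank (replacing $\Gamma$ by $\langle\Gamma,P_0-P_s\rangle$); your sketch does not address either issue. Second, in the reduction to $\IQbar$, ``preserving the rank of $\Gamma$'' is not the right invariant: $\Gamma$ is of finite rank but need not be finitely generated, so one must exhaust it by the filtered system $\frac1n\Gamma_0$ and use injectivity of the specialization homomorphism (Masser's theorem over a higher-dimensional base, not just Silverman's over a curve) so that the intersection $\#(C-P_0)\cap\frac1n\Gamma_0$ does not shrink upon specialization; a rank equality alone does not give this.
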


The proof of Theorem~\ref{MainThm} is as follows. First we reduce
Conjecture~\ref{ConjMazur} to the case $F = \IQbar$ by using a
specialization  result of Masser~\cite{masser1989specializations}. This is executed in $\mathsection$\ref{SectionSpecialization}. When $F = \IQbar$, our proof follows closely and uses our previous work \cite{DGHUnifML}, where we proved \eqref{EqBoundMazur} for all curves whose modular height is bounded below by a constant depending only on $g$; see \cite[Theorem~1.2]{DGHUnifML}. The key point in \cite{DGHUnifML} is to prove a height inequality, which we cite as Theorem~\ref{ThmHeightInequalityDGH} in the current paper. As is shown by the proof of \cite[Proposition~7.1]{DGHUnifML}, the extra hypothesis on the modular height of curves required in \cite[Theorem~1.2]{DGHUnifML} is necessary because of the constant term in this height inequality. 
In this paper, we show that this constant term can be removed if we assume the Relative Bogomolov Conjecture; see Proposition~\ref{PropHeightInequalitystrengthend} for a precise statement. Then by following the framework presented in \cite{DGHUnifML} we can prove our Theorem~\ref{MainThm} for $F = \IQbar$.

\medskip

We emphasize that what the Relative Bogomolov Conjecture does is to \textit{complement} \cite{DGHUnifML} to prove the full  Conjecture~\ref{ConjMazur}. More precisely, \cite[Theorem~1.2]{DGHUnifML} proves Conjecture~\ref{ConjMazur} for curves $C$ whose modular height is larger than a number $\delta = \delta(g)$ depending only on the genus $g$, and the Relative Bogomolov Conjecture can handle curves with small modular height.%We still need the main results \cite[Proposition~7.1 and Theorem~1.2]{DGHUnifML} in the current paper.

After this paper is completed, K\"{u}hne proved the \textit{Uniform Bogomolov Conjecture} for curves \cite{Kuehne:21} which also handles curves with small modular height. He can replace the \textit{Relative Bogomolov Conjecture} in the proof of Theorem~\ref{MainThm} and thus obtain an \textit{unconditional} proof of Conjecture~\ref{ConjMazur}. We refer to the survey \cite{GaoSurveyUML} for more detailed comments.%some discussions on \textit{Relative Bogomolov Conjecture} % We also point out that \cite[Theorem~3]{Kuehne:21} can be deduced from the Relative Bogomolov Conjecture; see \cite[$\mathsection$10.1]{GaoSurveyUML}.% by complementing \cite{DGHUnifML} with arguments related to $\mathsection$2.3 and 2.4 of the current paper.

\subsection*{Acknowledgements} We would like to thank the referee for their comments. Vesselin Dimitrov has received funding
from the European Union's Seventh Framework Programme (FP7/2007--2013)
/ ERC grant agreement n$^\circ$ 617129. Ziyang Gao has received
fundings from the French National Research Agency grant
ANR-19-ERC7-0004, and the European Research Council (ERC) under the
European Union's Horizon 2020 research and innovation programme (grant
agreement n$^\circ$ 945714). Philipp Habegger has received funding
from the Swiss National Science Foundation (grant n$^\circ$ 200020\_184623).

\section{Proof of the main result for $F=\IQbar$}
\label{sec:Qbarcase}
In this section we prove Theorem~\ref{MainThm} when $F = \IQbar$.
\begin{theorem}
  \label{thm:mainthmQbarcase}
  The Relative Bogomolov Conjecture, \textit{i.e.}, Conjecture \ref{ConjRelBog},
  implies Conjecture \ref{ConjMazur} for $F=\IQbar$. 
\end{theorem}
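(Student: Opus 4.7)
My plan is to combine the counting framework from \cite{DGHUnifML}---which already establishes Conjecture~\ref{ConjMazur} for curves of sufficiently large modular height---with Conjecture~\ref{ConjRelBog} to remove that large-height hypothesis. I would work inside a universal family over $\IQbar$: fix $g\ge 2$, choose $S$ a fine enough cover of the moduli space of smooth pointed genus-$g$ curves, let $\mathfrak{C}\to S$ be the universal curve with marked section $\sigma$, and let $\pi\colon\mathcal{A}\to S$ be the relative Jacobian equipped with a symmetric relatively ample $\mathcal{L}$. The Abel--Jacobi map based at $\sigma$ embeds $\mathfrak{C}\hookrightarrow\mathcal{A}$, and on each fiber $\hat h_{\mathcal{L}}$ coincides with the usual N\'{e}ron--Tate height. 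Recall the height inequality of \cite{DGHUnifML} (cited later as Theorem~\ref{ThmHeightInequalityDGH}): on a suitable fiber power $\mathfrak{C}^{[N]}\subset\mathcal{A}$, outside a proper Zariski-closed exceptional subset, one has
\[
 \hat h_{\mathcal{L}}(P)\ \ge\ c_1\, h_{\mathrm{mod}}(\pi(P)) - c_2
\]
with $c_1,c_2>0$ depending only on $g$. The Vojta--R\'{e}mond counting argument in \cite{DGHUnifML} converts this into the desired bound $\#(C-P_0)\cap\Gamma\le c(g)^{1+\rk\Gamma}$ under the hypothesis $h_{\mathrm{mod}}(C)\ge c_2/c_1$.

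The crux is therefore to upgrade this to a strengthened inequality (Proposition~\ref{PropHeightInequalitystrengthend}) in which the constant term $-c_2$ is removed, at the cost of possibly enlarging the exceptional set. I would establish this by contradiction: if it fails, then a standard Noetherian/Zariski-closure argument on the offending small-height points produces an irreducible subvariety $X\subset\mathcal{A}$ dominating $S$ on which a Zariski-dense set of $\IQbar$-points has $\hat h_{\mathcal{L}}$ arbitrarily close to zero. Using the bigness/degeneracy analysis from \cite{DGHUnifML}---which rests on the Ax--Schanuel theorem for mixed Shimura varieties and on bigness of the Betti map---one arranges $X$ so that (a) $\codim_{\mathcal{A}}X>\dim S$, (b) $X_{\overline{\eta}}$ is irreducible, and (c) $X_{\overline{\eta}}$ is not contained in any proper algebraic subgroup of $\mathcal{A}_{\overline{\eta}}$. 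Conjecture~\ref{ConjRelBog} then yields $\epsilon>0$ for which $X(\epsilon;\mathcal{L})$ is not Zariski dense in $X$, contradicting the density extracted above. With the strengthened inequality in hand, the counting argument of \cite{DGHUnifML} applies uniformly over all $C/\IQbar$ of genus $g$, giving Conjecture~\ref{ConjMazur} for $F=\IQbar$.

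I expect the main obstacle to be the production of $X$ satisfying (a), (b) and (c) simultaneously. The codimension condition (a) is engineered by working on a fiber power $\mathfrak{C}^{[N]}$ with $N$ chosen sufficiently large relative to $\dim S$, so that the naive dimension of any dominating subvariety carrying a dense low-height set exceeds the dimension of the base. The non-degeneracy conditions (b) and (c) are the delicate point: one must rule out the possibility that the low-height counter-examples are forced to lie inside a proper subgroup scheme of $\mathcal{A}/S$, which would short-circuit the application of Conjecture~\ref{ConjRelBog} and reduce the problem to a lower-dimensional abelian scheme. Handling this case cleanly---presumably via an inductive argument on the relative dimension combined with the geometric rigidity results from \cite{DGHUnifML}---is where the real technical work will sit.
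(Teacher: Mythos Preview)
Your overall architecture matches the paper's: upgrade the height inequality of \cite{DGHUnifML} by removing the additive constant via Conjecture~\ref{ConjRelBog}, then run the Vojta--R\'emond counting and ball-packing from \cite{DGHUnifML} uniformly. But your route to the strengthened inequality is unnecessarily indirect, and the ``main obstacle'' you flag is an artifact of that indirection rather than a genuine difficulty.

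You propose to argue by contradiction, extracting from a dense set of small-height points some subvariety $X$ and then struggling to verify that this $X$ meets the hypotheses of Conjecture~\ref{ConjRelBog}. The paper sidesteps this entirely: it applies Conjecture~\ref{ConjRelBog} \emph{directly} to the whole Faltings--Zhang image $X=\mathscr{D}_M(\mathfrak{C}_S^{[M+1]})$. For this particular $X$ all three hypotheses are automatic: the codimension bound is the elementary count $(g-1)M-1>3g-3\ge\dim S$ for $M$ as stated; irreducibility of $X_{\overline\eta}$ holds because each fiber is the image of a product of smooth curves; and non-containment in a proper algebraic subgroup holds because a smooth curve of genus $\ge 1$ generates its Jacobian. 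Once Conjecture~\ref{ConjRelBog} supplies $\epsilon>0$ with $\overline{X(\epsilon;\cL)}^{\mathrm{Zar}}\subsetneq X$, one simply replaces the open set $U$ from Theorem~\ref{ThmHeightInequalityDGH} by $U\setminus\overline{X(\epsilon;\cL)}^{\mathrm{Zar}}$ and does a two-line case split on the size of $h_{\overline{\mathbb{A}_g}}(\pi(P))$ to absorb the constant. No Noetherian extraction from bad points, no inductive reduction on relative dimension, is needed at this step. (There \emph{is} a Noetherian induction on $\dim S$ later, in Proposition~\ref{PropNTDistance}, but it serves a different purpose---handling fibers over the locus where $U$ fails to surject onto $S$---and does not touch the hypotheses of Conjecture~\ref{ConjRelBog}.) A minor related point: working with the Faltings--Zhang map $\mathscr{D}_M$ rather than a fixed Abel--Jacobi section $\sigma$ is what makes the subgroup non-degeneracy transparent.
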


The proof follows closely and is based on our previous work \cite{DGHUnifML}. 

\subsection{Basic setup}
\label{sec:basicsetup}
Fix an integer $g \ge 2$.
Let $\mathbb{M}_g$ be the fine moduli space of smooth projective curves of genus $g$ 
with level-$4$-structure, \textit{cf.}
 \cite[Chapter~XVI, Theorem~2.11 (or above
Proposition~2.8)]{ACG:Curve},
 \cite[(5.14)]{DM:irreducibility}, or \cite[Theorem
 1.8]{OortSteenbrink}.  It is known that
$\mathbb{M}_g$ is an irreducible regular quasi-projective variety
defined over $\IQbar$, and $\dim \mathbb{M}_g = 3g-3$.
This variety solves the underlying moduli problem.
There exists a universal curve $\mathfrak{C}_g$ over
$\mathbb{M}_g$, it is smooth and proper over
$\mathbb{M}_g$ with fibers that are smooth curves of genus
$g$. Moreover, it is equipped with level
$3$-structure. 

Let $\mathrm{Jac}(\mathfrak{C}_g)$ be the relative Jacobian of
$\mathfrak{C}_g \rightarrow \mathbb{M}_g$. It is an abelian scheme
equipped with a natural principal polarization and with level-$3$-structure; see
\cite[Proposition~6.9]{MFK:GIT94}.

Let  $\mathbb{A}_g$ be the fine moduli space of principally polarized
abelian varieties of dimension $g$ with level-$3$-structure. It is known that
$\mathbb{A}_g$ is an irreducible regular quasi-projective variety defined over $\IQbar$; see
\cite[Theorem~7.9 and below]{MFK:GIT94} or \cite[Theorem
1.9]{OortSteenbrink}.
Here too we have a universal object, the universal abelian scheme
$\pi \colon \mathfrak{A}_g \rightarrow \mathbb{A}_g$ of fiber
dimension $g$.
There exists a relatively very ample line bundle
$\cL$ on $\mathfrak{A}_g/\mathbb{A}_g$
satisfying $[-1]^*\cL = \cL$; see \cite[Th\'{e}or\`{e}me~XI~1.4]{LNM119}. By \cite[Proposition~4.4.10(ii) and Proposition~4.1.4]{EGAII}, we then have a closed immersion $\mathfrak{A}_g \rightarrow \IP^n_{\IQbar}\times \mathbb{A}_g$ over $\mathbb{A}_g$ arising from $\cL\otimes\pi^*\cM^{\otimes p}$, where $\cM$ is an ample line bundle on $\mathbb{A}_g$, for some integer $p \ge 1$.

Attaching the Jacobian to a smooth curve induces the Torelli morphism
$\tau\colon \mathbb{M}_g \rightarrow \mathbb{A}_g$. The famous Torelli
theorem states that, absent level structure, the Torelli morphism is
injective on $\IC$-points.
In out setting,  $\tau$ is finite-to-$1$ on $\IC$-points, \textit{cf.} \cite[Lemma~1.11]{OortSteenbrink}. 
As $\mathbb{A}_g$ is a fine moduli space we have the following Cartesian diagram
\begin{equation}
\label{EqUnivJac}
\xymatrix{
\mathrm{Jac}(\mathfrak{C}_g) \ar[r] \ar[d]  \pullbackcorner & \mathfrak{A}_g \ar[d]^{\pi} \\
\mathbb{M}_g \ar[r]_{\tau} & \mathbb{A}_g
}
\end{equation}

Suppose we have an immersion $\mathbb{A}_g \subseteq \IP_{\IQbar}^N$
defined over $\IQbar$, such an immersion exists. We write
$\overline{\mathbb{A}_g}$ for the Zariski closure of $\mathbb{A}_g$ in
$\IP_{\IQbar}^N$. Then the absolute logarithmic Weil height on
$\IP^N_{\IQbar}(\IQbar)$ restricts to a height function
$h_{\overline{\mathbb{A}_g}} \colon \overline{\mathbb{A}_g}(\IQbar)
\rightarrow \IR$. Thus $h_{\overline{\mathbb{A}_g}}$ represents the
Weil height attached to the ample line bundle obtained by restricting
$\cO(1)$ on $\IP_{\IQbar}^N$ to $\overline{\mathbb{A}_g}$. Moreover,
$h_{\overline{\mathbb{A}_g}}$ takes values in $[0,\infty)$ as the
Weil height function is non-negative. % Non-negativity is a theorem.

%For any morphism $S \rightarrow \mathbb{M}_g$ of schemes, define

For $M\in\IN=\{1,2,3,\ldots\}$ we write $\mathfrak{A}_g^{[M]}$ for the
$M$-fold fibered power
$\mathfrak{A}_g\times_{\mathbb{A}_g} \cdots \times_{\mathbb{A}_g}\mathfrak{A}_g$
over $\mathbb{A}_g$.
Then $\mathfrak{A}_g^{[M]}\rightarrow \mathbb{A}_g$ is an abelian
scheme.

Similarly, for  any morphism of schemes $S \rightarrow \mathbb{M}_g$,
the base change is
$\mathfrak{C}_S = \mathfrak{C}_g \times_{\mathbb{M}_g} S$. 
Furthermore, the $M$-fold fibered power  $\mathfrak{C}_S \times_S \cdots \times_S
\mathfrak{C}_S$ is denoted by $\mathfrak{C}_S^{[M]}$.
The
morphism $\mathfrak{C}_S\rightarrow S$ is smooth and therefore open,
thus so is $\mathfrak{C}_S^{[M]}\rightarrow S$.
Each fiber of the latter morphism is a product of curves and thus
irreducible. We conclude that $\mathfrak{C}_S^{[M]}$ is irreducible if
$S$ is.

Suppose $C$ is a smooth curve defined over a field and
$A = \jac{C}$. 
The  difference morphism  $C^{M+1}\rightarrow A^M$ determined by
\begin{equation}
  \label{eq:fiberwisefz}
  (P_0,P_1,\ldots,P_M) \mapsto (P_1-P_0,\ldots,P_M-P_0).
\end{equation}
is well-defined; we do not need to specify a base point for the
Abel--Jacobi map. It is an
 astonishingly powerful tool in diophantine geometry.

Let us make this more precise in our relative setting.
%Let us explain that \eqref{eq:faltingszhang0} yields,
% for each integer $M \ge 2$ and %% <- M\in\IN is enough
For each morphism of schemes $S \rightarrow \mathbb{M}_g
\xrightarrow{\tau} \mathbb{A}_g$, we briefly recall the  construction
of the proper  $S$-morphism
\begin{equation}
\label{eq:faltingszhang0}
\mathscr{D}_M\colon \mathfrak{C}_S^{[M+1]}\rightarrow \mathfrak{A}_g^{[M]} \times_{\mathbb{A}_g} S
\end{equation}
from \cite[$\mathsection$6.1]{DGHUnifML}.
 Indeed by the proof of \cite[Proposition~6.9]{MFK:GIT94} there is a
 morphism $\iota\colon \mathfrak{C}_S \rightarrow
 \mathrm{Pic}^1(\mathfrak{C}_S/S)$ to the line bundles of degree $1$.
 Let $f_1,f_2$ be any two morphisms from an $S$-scheme $T$
 with target $\mathfrak{C}_S$. Then
 the difference of $\iota\circ f_1$ and $\iota\circ f_2$ is a
  morphism
 % $S\rightarrow \mathfrak{C}_S$  under this morphism 
 % %$\mathrm{Pic}^1(\mathfrak{C}_S/S) \rightarrow S$
 % lies in
  $T\rightarrow \mathrm{Pic}^0(\mathfrak{C}_S/S) = \mathrm{Jac}(\mathfrak{C}_S)$.
So we get a morphism $\mathfrak{C}_S^{[2]}\rightarrow
\mathrm{Jac}(\mathfrak{C}_S)$. We compose with the natural morphism
$\mathrm{Jac}(\mathfrak{C}_S)\rightarrow \mathfrak{A}_g\times_{\IA_g} S$ coming from
the Torelli morphism. This construction extends to $M+1$ section and
yields (\ref{eq:faltingszhang0}).
Fiberwise  the morphism $\mathscr{D}_M$ behaves on points as (\ref{eq:fiberwisefz}).

% \philipp{Do we need (2.2), isn't (2.3) enough?}

The morphism $\mathscr{D}_M$ in \eqref{eq:faltingszhang0} is called
the \textit{$M$-th Faltings--Zhang} map. Note that if $S$ is
irreducible, then $\mathscr{D}_M(\mathfrak{C}_S^{[M+1]})$ is an
irreducible subvariety of the abelian scheme $\mathfrak{A}_g^{[M]}
\times_{\mathbb{A}_g} S \rightarrow S$.

We will use the following theorem, which we proved in \cite{DGHUnifML}, by applying \cite[Theorem~1.6]{DGHUnifML} to \cite[Theorem~1.2']{GaoBettiRank}.
\begin{theorem}\label{ThmHeightInequalityDGH}
Let $S$ be an irreducible variety with a (not necessarily dominant) quasi-finite morphism $S \rightarrow \mathbb{M}_g$. Assume $g \ge 2$ and $M \ge 3g-2$. Then there exist constants $c > 0$ and $c' \ge 0$ and a Zariski open dense subset $U$ of $\mathscr{D}_M(\mathfrak{C}_S^{[M+1]})$ with
\begin{equation}\label{EqHeightInequalityDGH}
\hat{h}_{\cL}(P) \ge c h_{\overline{\mathbb{A}_g}}(\pi(P)) - c' \quad\text{for all}\quad P \in U(\IQbar).
\end{equation}
\end{theorem}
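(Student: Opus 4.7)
The plan is to combine two results from prior work: a \emph{Betti non-degeneracy} statement for Faltings--Zhang images, namely \cite[Theorem~1.2']{GaoBettiRank}, together with an \emph{abstract height inequality}, \cite[Theorem~1.6]{DGHUnifML}, whose hypothesis is precisely maximal Betti rank and whose conclusion is a height comparison of the shape \eqref{EqHeightInequalityDGH}. The stated theorem then follows once these two ingredients are assembled in the setting of $X := \mathscr{D}_M(\mathfrak{C}_S^{[M+1]})$.

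First I would spell out the Betti setup. After base-changing $S$ along a suitable finite \'etale cover and passing to the complex analytification, one trivialises the fibers of the abelian scheme $\mathfrak{A}_g^{[M]}\times_{\IA_g} S \to S$ over a simply connected semialgebraic open $\Delta \subseteq \an{S}$ by means of a symplectic frame of the local system of periods. This produces a real-analytic \emph{Betti map} $b$ from an analytic open of the total space into the real torus $\TT^{2gM}$. Gao's Theorem~1.2' then asserts that when $M \ge 3g-2$ and the subvariety in question is the Faltings--Zhang image of the universal family of curves of genus $g \ge 2$, the restriction of $b$ to $X$ has locally maximal rank on a nonempty Zariski open subset. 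The numerical threshold $M \ge 3g-2$ is exactly what ensures that $X$ is generic enough with respect to the Betti coordinates, reflecting the fact that curves of genus $g$ vary in a $(3g-3)$-dimensional moduli space.

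Next I would feed this non-degeneracy into \cite[Theorem~1.6]{DGHUnifML}, a general height inequality on subvarieties of an abelian scheme whose Betti map is generically submersive. Applied to $X$, its conclusion is exactly the existence of a Zariski open dense $U \subseteq X$ together with constants $c>0$ and $c' \ge 0$ for which \eqref{EqHeightInequalityDGH} holds. In essence this step is an arithmetic enhancement of Silverman's variation-of-N\'eron--Tate-height theorem in families, combined with an equidistribution input in the style of Yuan--Zhang applied to an auxiliary big adelically metrized line bundle on $X$.

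The main obstacle will be the first step, namely the Betti-rank maximality theorem of \cite{GaoBettiRank}. Its proof is a delicate bi-algebraic argument resting on a mixed Ax--Schanuel theorem for $\mathfrak{A}_g$ together with a monodromy analysis of the Torelli family $\tau \colon \mathbb{M}_g \to \mathbb{A}_g$, crucially using that the Torelli image is not contained in any proper special subvariety of $\mathbb{A}_g$. Once maximal Betti rank has been established on the Faltings--Zhang image, the passage from it to the quantitative height inequality \eqref{EqHeightInequalityDGH} is comparatively routine and amounts to a height-machine computation combined with classical equidistribution.
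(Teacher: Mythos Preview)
Your proposal is correct and matches the paper exactly. The paper does not give a standalone proof of this theorem; it simply records it as a consequence of \cite[Theorem~1.6]{DGHUnifML} applied to \cite[Theorem~1.2']{GaoBettiRank}, and your outline identifies precisely these two inputs and explains how they combine.
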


\subsection{A strengthend height inequality} 
We use the notation in the previous subsection. Here we prove that the Relative Bogomolov Conjecture allows us to furthermore strengthen the height inequality given by Theorem~\ref{ThmHeightInequalityDGH}. Let $g \ge 2$. 
\begin{proposition}\label{PropHeightInequalitystrengthend}
Let $S$ be an irreducible variety with a (not necessarily dominant) quasi-finite morphism $S \rightarrow \mathbb{M}_g$. Let $M$ be an integer satisfying $M \ge 3g-1$ if $g=2$ and $M \ge 3g-2$ if $g\ge 3$. 

Assume that the Relative Bogomolov Conjecture, Conjecture~\ref{ConjRelBog},
holds true. 
Then there exist a constant $c > 0$ and a Zariski open dense subset $U$ of $\mathscr{D}_M(\mathfrak{C}_S^{[M+1]})$ with
\begin{equation}\label{EqHeightInequalityDGH2}
\hat{h}_{\cL}(P) \ge c \max\{1,h_{\overline{\mathbb{A}_g}}(\pi(P))\} \quad\text{for all}\quad P \in U(\IQbar).
\end{equation}
\end{proposition}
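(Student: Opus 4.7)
The plan is to apply Conjecture~\ref{ConjRelBog} to the Faltings--Zhang image $X := \mathscr{D}_M(\mathfrak{C}_S^{[M+1]})$ sitting in the abelian scheme $\cA := \mathfrak{A}_g^{[M]} \times_{\mathbb{A}_g} S$ over $S$, and then to combine its conclusion with Theorem~\ref{ThmHeightInequalityDGH} through a dichotomy on the size of the base height $h := h_{\overline{\mathbb{A}_g}}(\pi(P))$. Replacing $S$ by a dense open regular subscheme if necessary (the Faltings--Zhang image over such a subscheme is open dense in $X$), I may assume $S$ itself is regular. Because $\mathscr{D}_M$ is proper and $\mathfrak{C}_S^{[M+1]}$ is irreducible, $X$ is an irreducible closed subvariety of $\cA$ defined over $\IQbar$ and dominating $S$; its geometric generic fiber $X_{\bar\eta}$, being the image of the irreducible variety $C_{\bar\eta}^{M+1}$, is irreducible.

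I next verify the remaining hypotheses of Conjecture~\ref{ConjRelBog}. For non-degeneracy, suppose $X_{\bar\eta}$ were contained in a proper algebraic subgroup $H$ of $\cA_{\bar\eta} = \jac{C_{\bar\eta}}^M$. Since $X_{\bar\eta}$ contains the origin (image of the diagonal $P_0 = \cdots = P_M$), it lies in the identity component $H^\circ$, and the quotient homomorphism $q = (q_1, \ldots, q_M) \colon \jac{C_{\bar\eta}}^M \to \jac{C_{\bar\eta}}^M / H^\circ$ satisfies $\sum_{i=1}^{M} q_i(P_i - P_0) = 0$ for all $(P_0, \ldots, P_M) \in C_{\bar\eta}^{M+1}$. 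Freezing $P_0$ and every $P_i$ with $i \ne j$ forces $q_j$ to vanish on $C_{\bar\eta} - P_0$; since this curve generates $\jac{C_{\bar\eta}}$ as an abelian variety, $q_j = 0$ for each $j$, contradicting the properness of $H$. For the codimension count, the Faltings--Zhang map is generically quasi-finite when $g \ge 2$ (as $C_{\bar\eta}$ is not stable under any nontrivial translation in its Jacobian), so $\dim X = \dim S + M + 1$ and $\codim_\cA X = (g-1)M - 1$. Combined with $\dim S \le \dim \mathbb{M}_g = 3g - 3$, the thresholds $M \ge 3g-1$ for $g=2$ and $M \ge 3g-2$ for $g \ge 3$ deliver precisely the strict inequality $(g-1)M - 1 > \dim S$; the sharper demand in the genus-two case is dictated by Conjecture~\ref{ConjRelBog}'s codimension requirement, not by Theorem~\ref{ThmHeightInequalityDGH}.

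Conjecture~\ref{ConjRelBog} now produces $\epsilon > 0$ such that the Zariski closure $Y$ of $X(\epsilon; \cL)$ is a proper closed subvariety of $X$. Theorem~\ref{ThmHeightInequalityDGH}, whose hypothesis $M \ge 3g-2$ is satisfied in both cases, furnishes constants $c_0 > 0$, $c' \ge 0$ and a Zariski open dense $U_1 \subseteq X$ on which $\hat{h}_\cL(P) \ge c_0 h - c'$. I would then take $U := U_1 \setminus Y$, still open dense in $X$, and conclude by a case split. Setting $T := \max\{1, 2c'/c_0\}$: if $h \ge T$, then $h \ge 1$ and $c_0 h - c' \ge c_0 h / 2$, so $\hat{h}_\cL(P) \ge (c_0/2) \max\{1, h\}$; if $h < T$, then $\max\{1, h\} \le T$ while $\hat{h}_\cL(P) > \epsilon$ because $P \notin Y$, yielding $\hat{h}_\cL(P) > (\epsilon/T) \max\{1, h\}$. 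Taking $c := \min\{c_0/2, \epsilon/T\}$ produces \eqref{EqHeightInequalityDGH2}.

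The main obstacle is the identification of the right subvariety to which to apply Conjecture~\ref{ConjRelBog}: the Faltings--Zhang image $X$ is tailor-made, both in its manifest non-degeneracy inside $\cA_{\bar\eta}$, a consequence of the Abel--Jacobi theorem, and in its codimension count, which is precisely what forces the slight worsening of the bound on $M$ in the genus-two case. Once this choice is made, the resulting uniform lower bound $\hat{h}_\cL(P) > \epsilon$ outside a proper closed subvariety is exactly what is needed to absorb the additive constant $c'$ in the low-base-height regime that Theorem~\ref{ThmHeightInequalityDGH} alone cannot control.
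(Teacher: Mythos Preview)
Your proof is correct and follows essentially the same approach as the paper: apply Conjecture~\ref{ConjRelBog} to $X=\mathscr{D}_M(\mathfrak{C}_S^{[M+1]})$ to obtain an $\epsilon>0$ with $\overline{X(\epsilon;\cL)}^{\mathrm{Zar}}\subsetneq X$, intersect its complement with the open set $U_1$ from Theorem~\ref{ThmHeightInequalityDGH}, and then perform the dichotomy on $h_{\overline{\mathbb{A}_g}}(\pi(P))$ against the threshold $\max\{1,2c'/c_0\}$. Your write-up is in fact a bit more careful than the paper's in two places: you explicitly pass to a regular open subscheme of $S$ (required by the statement of Conjecture~\ref{ConjRelBog}), and you spell out the non-degeneracy of $X_{\bar\eta}$ via the quotient argument rather than simply invoking that the curve generates its Jacobian.
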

\begin{proof}
  The fiber
  of $\mathfrak{C}_S^{[M+1]}\rightarrow S$ above $s\in S(\IQbar)$,
  is a product of $M+1$ curves of genus $g$. The Faltings--Zhang map
  is generically finite on this product. So 
  we have
  \begin{equation}
    \label{eq:codimbound}
    \codim_{\mathfrak{A}_g^{[M]} \times_{\mathbb{A}_g} S} \mathscr{D}_M(\mathfrak{C}_S^{[M+1]}) = Mg - (M+1) = (g-1)M -1 > 3g-3 \ge \dim S.
  \end{equation}
  
Let $\overline{\eta}$ be the geometric generic point of $S$.
Each fiber of
  $\mathscr{D}_M(\mathfrak{C}_S^{[M+1]})\rightarrow S$ is the image of
the $(M+1)$-fold power of a smooth curve under the Faltings--Zhang
map. So all fibers are irreducible and thus so is the geometric
generic fiber $\mathscr{D}_M(\mathfrak{C}_S^{[M+1]})_{\overline{\eta}}$.
As a smooth curve generates its Jacobian we find that
$\mathscr{D}_M(\mathfrak{C}_S^{[M+1]})_{\overline{\eta}}$ is not
contained in a  proper algebraic subgroup of $\mathfrak{A}_g^{[M]}
\times_{\mathbb{A}_g} \overline{\eta}$. By (\ref{eq:codimbound}) the image  $X :=
\mathscr{D}_M(\mathfrak{C}_S^{[M+1]})$ satisfies the assumptions of
the
Relative Bogomolov Conjecture. Thus there exists $\epsilon >
0$ such that $\overline{X(\epsilon ; \cL)}^{\mathrm{Zar}}$,
the Zariski
closure of 
$X(\epsilon ; \cL) = \{x \in X(\IQbar) : \hat{h}_{\cL}(x) \le
\epsilon\}$,
is not equal to $X$. 

Let $c >0$, $c' \ge 0,$ and $U$ be as in Theorem~\ref{ThmHeightInequalityDGH}. The paragraph above implies that 
\[
U \setminus \overline{X(\epsilon ; \cL)}^{\mathrm{Zar}}
\]
is still Zariski open and dense in $X$.

% As $\epsilon > 0$, we can fix an integer $l > 0$ such that $l\epsilon > c'$.

It suffices to prove that \eqref{EqHeightInequalityDGH2} holds true
with $c$ a positive constant that is independent of $P$ and with
$U$ replaced $U \setminus \overline{X(\epsilon ; \cL)}^{\mathrm{Zar}}$. %Then we are done.

Take any $P \in (U \setminus \overline{X(\epsilon ;
  \cL)}^{\mathrm{Zar}})(\IQbar)$. So
$\hat h_\cL(P) \ge \epsilon$ and
$\hat h_\cL(P) \ge c h_{\overline{\IA_g}}(\pi(P))-c'$. 

We split up into the two cases depending on whether
$\max\{1,h_{\overline{\mathbb{A}_g}}(\pi(P))\}\le \max\{1,2c'/c\}$ holds
or does not hold.

In the first case we have 
$$
\hat h_{\cL}(P)\ge \frac{\epsilon}{\max\{1,2c'/c\}} 
\max\{1,h_{\overline{\mathbb{A}_g}}(\pi(P))\}
$$
and \eqref{EqHeightInequalityDGH2} follows with $\epsilon /
\max\{1,2c'/c\}$ for the constant $c$.

In the second case we have
%$\max\{1,h_{\overline{\mathbb{A}_g}}(\pi(P))\}=
$h_{\overline{\mathbb{A}_g}}(\pi(P))>
\max\{1,2c'/c\}$ and hence
$c h_{\overline{\mathbb{A}_g}}(\pi(P))-c'\ge c
h_{\overline{\mathbb{A}_g}}(\pi(P))/2$. Thus
$$
\hat h_{\cL}(P)\ge \frac c2
\max\{1,h_{\overline{\mathbb{A}_g}}(\pi(P))\}. 
$$
Again \eqref{EqHeightInequalityDGH2} holds with $c/2$ for the constant
$c$. 
%
% If $\frac{c}{l+1}h_{\overline{\mathbb{A}_g}}(\pi(P)) < \epsilon$, we
% are done.
% So let us assume $\frac{c}{l+1}h_{\overline{\mathbb{A}_g}}(\pi(P)) \ge
% \epsilon$. Then we have
% \begin{align*}
%   \hat{h}_{\cL}(P) & \ge c h_{\overline{\mathbb{A}_g}}(\pi(P)) - c'\\
%   %\quad\text{by \eqref{EqHeightInequalityDGH}}\\
% & > c h_{\overline{\mathbb{A}_g}}(\pi(P)) -  l \epsilon \quad\text{because $l\epsilon > c'$} \\
% & \ge c h_{\overline{\mathbb{A}_g}}(\pi(P)) - l \frac{c}{l+1}h_{\overline{\mathbb{A}_g}}(\pi(P)) \quad\text{by the assumption on this case} \\
% & = \frac{c}{l+1}h_{\overline{\mathbb{A}_g}}(\pi(P)). \qedhere
% \end{align*}
\end{proof}
\begin{remark}
The proof of Proposition~\ref{PropHeightInequalitystrengthend} is the only place where the Relative Bogomolov Conjecture is used in the proof of Theorem~\ref{MainThm}.
\end{remark}

\subsection{Dichotomy on the N\'{e}ron--Tate distance between points
  on curves}
We use the notation of Subsection~\ref{sec:basicsetup}.

\begin{proposition}\label{PropNTDistance}
Let $S$ be an irreducible closed subvariety of $\mathbb{M}_g$.

Assume that the Relative Bogomolov Conjecture,
Conjecture~\ref{ConjRelBog}, holds true. 
Then there exist positive constants $c_2, c_3, c_4$ depending on $S$
with the following property. For all $s \in S(\IQbar)$
 there is a subset $\Xi_s \subseteq \mathfrak{C}_s(\IQbar)$ with
 $\#\Xi_s \le c_2$ such that any $P \in \mathfrak{C}_s(\IQbar)$
 satisfies one of the following cases.
\begin{enumerate}
\item[(i)] We have $P \in \Xi_s$;
\item[(ii)] or $\#\bigl\{Q \in \mathfrak{C}_s(\IQbar) : \hat{h}_{\cL}(Q-P) \le c_3^{-1}\max\{1,h_{\overline{\mathbb{A}_g}}(\tau(s))\}\bigr\} < c_4$.
\end{enumerate}
\end{proposition}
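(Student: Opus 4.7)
The plan is to use Proposition~\ref{PropHeightInequalitystrengthend} together with a dimension count, proceeding by Noetherian induction on $\dim S$. I first fix $M$ satisfying the hypotheses of that proposition and apply it to $S$, obtaining a constant $c > 0$ and a Zariski open dense subset $U\subseteq X := \mathscr{D}_M(\mathfrak{C}_S^{[M+1]})$. Setting $Z := X\setminus U$ and $Y := \mathscr{D}_M^{-1}(Z)$, this $Y$ is a proper Zariski closed subset of the irreducible variety $\mathfrak{C}_S^{[M+1]}$, and it is the main geometric object to track.

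Next I control the exceptional set. Define
\[
\Xi_s := \{P \in \mathfrak{C}_s(\IQbar) : \{P\} \times \mathfrak{C}_s^M \subseteq Y_s\}, \qquad \Xi := \bigsqcup_{s} \Xi_s \subseteq \mathfrak{C}_S.
\]
Since $\dim Y \le \dim S + M$, while the first-coordinate projection $Y \to \mathfrak{C}_S$ has $M$-dimensional fibers above points of $\Xi$ by definition, the dimension count yields $\dim \Xi \le \dim S$. Hence the projection $\Xi \to S$ is either non-dominant (so the inductive hypothesis applies to the proper closed subvariety $\overline{\pi(\Xi)} \subsetneq S$) or generically finite with uniformly bounded cardinality on a dense open $S^\circ \subseteq S$ (and the inductive hypothesis is applied on $S \setminus S^\circ$). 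Either way $|\Xi_s| \le c_2$ uniformly, with the base case $\dim S = 0$ being trivial.

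The core step converts closeness of many points into a containment inside $Y_{s,P}$. Fix $c_3 > M/c$. For $P \in \mathfrak{C}_s(\IQbar) \setminus \Xi_s$ and any $Q_1, \dots, Q_n \in \mathfrak{C}_s(\IQbar)$ with $\hat h_\cL(Q_i - P) \le c_3^{-1} \max\{1, h_{\overline{\mathbb{A}_g}}(\tau(s))\}$, the additivity of the N\'eron--Tate height on the fibered power gives
\[
\hat h_\cL\bigl(\mathscr{D}_M(P, Q_{i_1}, \dots, Q_{i_M})\bigr) = \sum_{j=1}^M \hat h_\cL(Q_{i_j} - P) < c \max\{1, h_{\overline{\mathbb{A}_g}}(\tau(s))\}
\]
for every $M$-tuple, so each such image lies in $Z$ by Proposition~\ref{PropHeightInequalitystrengthend} and each tuple lies in $Y_{s,P}$. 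Hence $\{Q_1, \dots, Q_n\}^M \subseteq Y_{s,P}$, a proper closed subvariety of $\mathfrak{C}_s^M$ since $P\notin\Xi_s$.

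Finally I bound $n$ by induction on $M$ via projection of $Y_{s,P}$ onto the first factor: either the image is a finite subvariety of $\mathfrak{C}_s$ (bounding $n$ by its cardinality), or some $Q_i$ has a fiber in $\mathfrak{C}_s^{M-1}$ that is again a proper closed subvariety containing $\{Q_1, \dots, Q_n\}^{M-1}$, so we recurse. The main obstacle is making the resulting bound uniform in $s \in S$, which requires a uniform degree bound on $Y_{s,P}$ with respect to a suitable polarization of the fibers $\mathfrak{C}_s^M$. Such uniformity follows from the properness of the family $Y \to S$ together with the relatively ample $\cL$ and the polarization on $\mathbb{A}_g$, via a Hilbert scheme / constructible-function argument, yielding the uniform $c_4$.
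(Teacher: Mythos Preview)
Your proof is correct and follows the same overall architecture as the paper: Noetherian induction on $\dim S$, apply Proposition~\ref{PropHeightInequalitystrengthend}, define $\Xi_s$ as the set of $P$ for which the whole fiber lands in the bad locus, and then for $P \notin \Xi_s$ use the height inequality to force $\{Q_1,\dots,Q_n\}^M$ into a proper closed subvariety and finish with a uniform degree/combinatorial bound. The substantive difference is in how you bound $\#\Xi_s$. The paper works on the abelian-scheme side with $W = X \setminus U$ and, for each $s \in \pi_S(U)$, invokes \cite[Lemma~6.4]{DGHUnifML}: every irreducible component $Z$ of $W_s$ contributes at most $84(g-1)$ points to $\Xi_s$ (the Hurwitz automorphism bound), and the number of components of $W_s$ is uniformly bounded since $W$ is a fixed family over $S$; the Noetherian induction is then invoked only on the components of $\overline{S \setminus \pi_S(U)}$. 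You instead pull back to $Y \subset \mathfrak{C}_S^{[M+1]}$ and run a pure dimension count ($\dim Y \le \dim S + M$ forces $\dim \Xi \le \dim S$, hence $\Xi \to S$ is generically finite or non-dominant), folding the control of $\#\Xi_s$ entirely into the Noetherian induction. Your route is more elementary and self-contained; the paper's yields a more explicit constant and a cleaner separation between the induction (only on $S\setminus\pi_S(U)$) and the fiberwise bound. For the final step, the paper cites \cite[Lemma~6.3]{DGHUnifML} directly and secures the uniform degree bound via the fixed embedding $\mathfrak{A}_g \hookrightarrow \IP^n_{\IQbar} \times \mathbb{A}_g$; your projection-and-recurse sketch is exactly the content of that lemma, and the Hilbert-scheme/constructibility uniformity you invoke is correct, just less explicit than the paper's approach.
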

We start the numbering of the constants from $c_2$ to make the
statement comparable to \cite[Proposition~7.1]{DGHUnifML}. The proof
of this proposition is similar to the proof of
\cite[Proposition~7.1]{DGHUnifML}. The main difference is that the
height inequality Theorem~\ref{ThmHeightInequalityDGH} is replaced by the strengthend version Proposition~\ref{PropHeightInequalitystrengthend}, and this allows us to remove the extra hypothesis on $h_{\overline{\mathbb{A}_g}}(\tau(s))$ in \cite[Proposition~7.1]{DGHUnifML}.

\begin{proof} We prove the proposition by induction on $\dim S$.
  The induction start
  $\dim S =0$ is treated as part of the induction step.
% We follow the convention that the dimension of the empty set is set to be $-1$.
%\philipp{I would rather start at $\dim S =0$ as the empty  scheme is not irreducible (by definition).}
%%%% !!!!!

% If $\dim S = -1$, \textit{i.e.} $S = \emptyset$, then the result clearly holds true.

%CP begins
We fix $M$ as in
Proposition~\ref{PropHeightInequalitystrengthend}. Then there exist a
constant $c > 0$ and a Zariski open dense subset $U$ of
$X=\mathscr{D}_M(\mathfrak{C}_{S}^{[M+1]})$ satisfying the following
property. For all $s \in S(\IQbar)$ and all $P, Q_1,\ldots,Q_M \in
\mathfrak{C}_s(\IQbar)$, we have

\begin{equation}\label{EqHtInequalityBdRatPt}
  c \max\{1,h_{\overline{\mathbb{A}_g}}(\tau(s))\} \le
  \hat{h}(Q_1-P) + \cdots + \hat{h}(Q_M - P)  
  \text{ if } (Q_1-P,\ldots, Q_M-P) \in U(\IQbar).
\end{equation}

Observe that $\pi_S(X)=S$, where $\pi_S\colon
\mathfrak{A}_{g}^{[M]}\times_{\mathbb{M}_g} S\rightarrow S$ is the
structure morphism. Thus $S \setminus \pi_S(U)$ is not Zariski dense in
$S$. Let $S_1,\ldots, S_r$ be the irreducible components of the
Zariski closure of $S\setminus \pi_S(U)$ in $S$. Then $\dim S_j \le
\dim S - 1$ for all $j$.

Note that if $\dim S = 0$, then $\pi_S(U)=S$ and $r=0$.
If $\dim S\ge 1$, then
this  proposition holds for all, if any,
$S_j$ by the induction hypothesis.
So it
remains to prove the conclusion of this proposition for curves above
\begin{equation}
\label{eq:soutsideSi}
s \in S(\IQbar) \setminus \bigcup_{j=1}^r S_j(\IQbar).
\end{equation}
% Fix such an $s$.  We start by constructing $\Xi_s$ and then
% show that we are in one of the alternatives. 

The image of $s$ under the Torelli map is $\tau(s)$. For any $P \in
\mathfrak{C}_s(\IQbar)$, we will consider $\mathfrak{C}_s - P$ as a
curve inside $(\mathfrak{A}_g)_{\tau(s)}$ via the Abel--Jacobi map
based at $P$.

The set $W = X\setminus U$ is a Zariski closed proper  subset of
$X$. The fiber  $W_s$ of $W$ above $s$ satisfies $W_s\subsetneq X_s =
\mathscr{D}_M(\mathfrak{C}_s^{[M+1]})$
as  (\ref{eq:soutsideSi}) implies $s\in \pi_S(U(\IQbar))$. 

We define
\begin{equation}
  \label{def:Xis}
  %% PH: Better to display \Xi_s as it's more important for the
  %% proposition. \Xi_Z is only here for technical reasons. 
  \Xi_s = \{P \in \mathfrak{C}_s(\IQbar) : (\mathfrak{C}_s-P)^M
  \subseteq W_s\}. 
\end{equation}
Note that $\Xi_s = \bigcup_{Z} \Xi_Z$ where $Z$ ranges over the 
 irreducible components   of $W_s$ and
$\Xi_Z := \{ P \in \mathfrak{C}_s(\IQbar) : ( \mathfrak{C}_s- P)^M
\subseteq Z \}$. 
We can thus apply  \cite[Lemma~6.4]{DGHUnifML} to $A = (\mathfrak{A}_g)_{\tau(s)},
C=\mathfrak{C}_s-P\subset A$, and each $Z$. As $Z\subsetneq
\mathscr{D}_M(\mathfrak{C}_s^{[M+1]})$ we have $\#\Xi_Z \le 84(g-1)$.
The precise value of $84(g-1)$, the classical upper bound for the automorphism
group of a curve of genus $g$, is not so important; but its uniformity
in $s$ is. 
% Still holding $s$ fixed we define 
%  $\Xi_s$ to be the union over $\Xi_Z$ where $Z$ runs over all irreducible
% components of $W_s$.
As $W_s$ appears as a fiber of $W$ considered as
a family over $S$, we see that
number of irreducible components of $W_s$ is bounded from above by a
constant $c'_2$ that is independent of $s$.
Our estimates imply
$\#\Xi_s \le c_2$ where  $c_2 =
84(g-1)c'_2$ is also independent of $s$.

We have constructed $\Xi_s$ which serves as the exceptional
set in part
(i). We will assume that (i) fails, \textit{i.e.}, 
$P\in \mathfrak{C}_s(\IQbar)\ssm \Xi_s$ and conclude (ii).
% . So we are not
% in case (i) of the proposition.
So $(\mathfrak{C}_s-P)^M
\not\subseteq W_s$ by (\ref{def:Xis}). Our next task is
to apply \cite[Lemma~6.3]{DGHUnifML} to
$\mathfrak{C}_s-P$ and $W_s$.

Recall that $\mathfrak{A}_g$ can be embedded in $\IP^n_{\IQbar} \times
\mathbb{A}_g$; see $\mathsection$\ref{sec:basicsetup} above \eqref{EqUnivJac}.
The image of $s\in \mathbb{M}_g(\IQbar)$ under the Torelli morphism is  
$\tau(s)\in\mathbb{A}_g(\IQbar)$. 
%Assume $\tau(s)\in V_i(\IQbar)$ for some $i$. 
We may take $\mathfrak{C}_s-P$ as a smooth curve in $\IP_{\IQbar}^n$. 
The degree of $\mathfrak{C}_s$ as a subvariety of 
$\IP_\IQbar^n$ is bounded from above independently 
of $s$. As translating inside an abelian variety
does not affect the degree, we see that the degree of
$\mathfrak{C}_s-P$ is bounded from above independently of $s$; see \cite[Lemma~6.1(i)]{DGHUnifML}.
% ; see \cite[Lemma~6.1]{DGHUnifML}.
Recall that $W_s$ is a Zariski closed subset of
$X_s\subset (\mathfrak{A}_g^{[M]})_{\tau(s)}$
we may identify it with a Zariski closed subset
of $(\IP_{\IQbar}^n)^M$.
The degree
of $W_s$ is bounded from above independently of $s$
as it is the fiber  above $\tau(s)$ of a
subvariety of $(\IP_{\IQbar}^n)^M\times \mathbb{A}_g$.
From \cite[Lemma~6.3]{DGHUnifML} we
thus obtain a number $c_4$, depending only  on these bounds but
not on $s$  with the
following property. Any subset $\Sigma \subset
\mathfrak{C}_s(\IQbar)$ with cardinality $\ge c_4$ satisfies $(\Sigma-P)^M
\not\subset W_s(\IQbar)$.% Now let $c_4 = \max(c_{4,1},\ldots,c_{4,t})$. Then $c_4$ is independent of $s$ and any subset $\Sigma \subset
%\mathfrak{C}_s(\IQbar)$ with cardinality $\ge c_4$ satisfies $(\Sigma-P)^M
%\not\subset W_s(\IQbar)$.

Finally set $\Sigma = \bigl\{Q \in \mathfrak{C}_s(\IQbar) : \hat{h}(Q-P)
\le  c_3^{-1} \max\{1,h_{\overline{\mathbb{A}_g}}(\tau(s))\} \bigr\}$ with $c_3 = 2M/c$. 

It remains to prove $\#\Sigma < c_4$, in which case we are in case (ii) of the proposition and hence we are done. Suppose $\#\Sigma \ge c_4$. Then $(\Sigma-P)^M \not\subset W_s(\IQbar)$. So
 there exist $Q_1,\ldots,Q_M \in
\Sigma$ such that $(Q_1 - P, \ldots, Q_M-P) \in
U(\IQbar)$. Thus we can apply \eqref{EqHtInequalityBdRatPt} and obtain
\begin{equation*}
%\label{eq:hScMbound}
  c\max\{1,h_{\overline{\mathbb{A}_g}}(\tau(s))\} \le 
  M\frac{c}{2M}
      \max\{1,h_{\overline{\mathbb{A}_g}}(\tau(s))\}  =
  \frac{c}{2} 
\max\{1,h_{\overline{\mathbb{A}_g}}(\tau(s))\},
\end{equation*}
a contradiction. 
% and thus $h_{\overline{\mathbb{A}_g}}(\tau(s)) < 0$. But this contradicts the non-negativity of the naive height function. Hence we are done.
\end{proof}

% CP ends

\subsection{Completion of the proof of Theorem~\ref{MainThm} for $F=\IQbar$}
We follow the argumentation in~\cite{DGH1p}, or more precisely
\cite[$\mathsection$8]{DGHUnifML}. We will assume that
Conjecture~\ref{ConjRelBog} holds true.

Let $C$ be a smooth genus $g \ge 2$ defined over
$\IQbar$, and let $\Gamma$ be a subgroup of $\mathrm{Jac}(C)(\IQbar)$
of finite rank $\rho$. Let $P_0 \in C(\IQbar)$.

The curve $C$ corresponds to a $\IQbar$-point $s_{\mathrm{c}}$ of
$\mathbb{M}_{g,1}$, the coarse moduli space of smooth curves of genus
$g$ without level structure.

The fine moduli space $\mathbb{M}_g$ of smooth curves of genus $g$
with level-$4$-structure admits  a finite and surjective morphism of
$\mathbb{M}_{g,1}$. So there exists an $s \in \mathbb{M}_g(\IQbar)$
that maps to $s_{\mathrm{c}}$. Thus $C$ is isomorphic, over $\IQbar$,
to the fiber $\mathfrak{C}_s$ of the universal curve $\mathfrak{C}_g
\rightarrow \mathbb{M}_g$ above $s$. We thus view $\Gamma$ as a finite rank subgroup of $\mathrm{Jac}(\mathfrak{C}_s)(\IQbar)$, and $P_0 \in \mathfrak{C}_s(\IQbar)$.

A standard application of R\'{e}mond's explicit formulation of the Vojta and Mumford inequalities~\cite{remond:vojtasup,Remond:Decompte} yields the following bound. There exists a constant $c = c(g) \ge 1$ with the following property. There is $P_s \in \mathfrak{C}_s(\IQbar)$ such that
\[
\#\left\{P \in \mathfrak{C}_s(\IQbar) : P-P_s \in \Gamma, ~ \hat{h}_{\cL}(P-P_s) > c \max\{1, h_{\overline{\mathbb{A}_g}}(\tau(s))\} \right\} \le c^{\rho};
\]
we refer to Lemma 8.2 and the proof of Proposition 8.1, both~\cite{DGHUnifML}, for details.
%[(8.7) and above]

Let us start by conditionally verifying
Conjecture~\ref{ConjMazur}
%proving Theorem~\ref{MainThm}
for $P_0 = P_s$. 
In this case it suffices to prove
\begin{equation}\label{EqSmallPoints}
\#\left\{P \in \mathfrak{C}_s(\IQbar) : P-P_s \in \Gamma, ~ \hat{h}_{\cL}(P - P_s) \le c \max\{1, h_{\overline{\mathbb{A}_g}}(\tau(s))\} \right\} \le {c'}^{1+\rho}
\end{equation}
for some $c'$ independent of $s$. To do this we apply Proposition~\ref{PropNTDistance} to $S=\mathbb{M}_g$. Let $c_2, c_3, c_4$ be from this proposition; they are independent of $s$. If $P\in \mathfrak{C}_s(\IQbar)$, then either $P \in \Xi_s$ for some $\Xi_s \subseteq \mathfrak{C}_s(\IQbar)$ with $\#\Xi_s \le c_2$ or 
\begin{equation}\label{Eq2ndAlt}
\#\left\{Q \in \mathfrak{C}_s(\IQbar) : \hat{h}_{\cL}(Q-P) \le c_3^{-1}\max\{1,h_{\overline{\mathbb{A}_g}}(\tau(s))\}\right\} < c_4.
\end{equation}
So to prove the desired bound \eqref{EqSmallPoints} we may assume $P \in \mathfrak{C}_s(\IQbar) \setminus \Xi_s$ and thus \eqref{Eq2ndAlt}.

Now we apply the ball packing argument as in \cite{DGHUnifML} to prove the bound \eqref{EqSmallPoints}. Consider
\begin{equation}\label{EqSmallPoints2}
\#\left\{P-P_s \in \Gamma :  P \in \mathfrak{C}_s(\IQbar)\setminus \Xi_s , ~ \hat{h}_{\cL}(P-P_s) \le c \max\{1, h_{\overline{\mathbb{A}_g}}(\tau(s))\} \right\}.
\end{equation}

Set $R = (c \max\{1, h_{\overline{\mathbb{A}_g}}(\tau(s))\})^{1/2}$.
We start by doing ball packing in the $\rho$-dimensional
$\IR$-vector space $\Gamma\otimes\IR$. It is well-known that $\hat
h^{1/2}$ defines an Euclidean norm on $\Gamma\otimes\IR$. The image in
$\Gamma\otimes\IR$ of the set in (\ref{EqSmallPoints2}) is contained in
the closed ball of radius $R$ centered at the image of $P_s$. Let
$r\in (0,R]$. By \cite[Lemme~6.1]{Remond:Decompte} a subset of
$\Gamma\otimes\IR$ that is contained in a closed ball of radius $R$ is
covered by at most $(1+2R/r)^{\rho}$ closed balls of radius $r$
centered at elements of the given set. The bound in (\ref{Eq2ndAlt})
suggests the choice $r = (c_3^{-1} \max\{1,
h_{\overline{\mathbb{A}_g}}(\tau(s))\})^{1/2}$. By possibly increasing
$c_3$ we may assume that the quotient $R/r = (c
c_3)^{1/2}$ lies in $[1,\infty)$. The crucial observation is that
$R/r$ 
 is independent of $s$.  So we
%the number of balls
can  cover the image of the set in
(\ref{EqSmallPoints2}) in $\Gamma\otimes\IR$ with
 at most $c_5^{\rho}$ balls of radius $r$ where $c_5\ge 1$ is
independent of $s$.% Moreover, the balls are centered at images of points $P$ from (\ref{EqSmallPoints}) with $P\not\in \Xi_s$.
  
 Say $P-P_s \in \Gamma$ maps to the center of a ball of radius $r$ in the covering. If $Q-P_s \in \Gamma$ maps to the same ball, then $\hat{h}(Q-P) \le r^2$. As $P \not \in \Xi_s$, by \eqref{Eq2ndAlt} the number of the $Q-P_s$'s that 
map to this closed ball of radius $r$  is
less than $c_4$. 
Thus the number of points in \eqref{EqSmallPoints2} is at most
$c_4 c_5^\rho$. So the number of points in the set from \eqref{EqSmallPoints} is at most $\#\Xi_s + c_4 c_5^\rho \le c_2 + c_4 c_5^\rho$, which is at most ${c'}^{1+\rho}$ for a suitable $c'$.
%  Thus \eqref{EqSmallPoints} holds true with $c' = \max\{cc_3, c_4\}$. 
  This completes the proof of the proposition in the case $P_0=P_s$.
  
  Now we turn to a general $P_0\in \mathfrak{C}_s(\IQbar)$. The
  subgroup $\Gamma'$ of
  % $\mathfrak{A}_{g,\tau(s)}(\IQbar)$
$\mathrm{Jac}(\mathfrak{C}_s)(\IQbar)$ generated by $\Gamma$ and
$P_0-P_s$ has rank most $\rho +1$. Now if $Q\in
\mathfrak{C}_s(\IQbar)-P_0$ lies in $\Gamma$, then $Q+P_0-P_s\in
\mathfrak{C}_s(\IQbar)-P_s$ lies in $\Gamma'$.
We have just proved that the number of such $Q$ 
is at most $c^{1+\rk(\Gamma')}\le c^{2+\rho}\le (c^2)^{1+\rho}$ for a
$c \ge 1$ that is independent of $s$. \qed

\section{From $\IQbar$ to an arbitrary base field in characteristic $0$}\label{SectionSpecialization}
In this section we perform a specialization argument to reduce Conjecture~\ref{ConjMazur} to the case $F = \IQbar$. 
\begin{lemma}
  \label{lem:mazurspecialization}
  If Conjecture~\ref{ConjMazur} holds true for $F = \IQbar$, then it
  holds true for an arbitrary field $F$ of characteristic $0$.
\end{lemma}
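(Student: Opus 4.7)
The plan is to reduce from an arbitrary characteristic-zero field $F$ to the case $F=\IQbar$ via Masser's specialization theorem \cite{masser1989specializations}. Set $\Sigma := (C(F)-P_0) \cap \Gamma$; we want $\#\Sigma \le c(g)^{1+\rk(\Gamma)}$. Since it suffices to bound $\#\Sigma'$ for every finite subset $\Sigma' \subset \Sigma$, and since the right-hand side of \eqref{EqBoundMazur} is monotone in the rank, I would first replace $\Gamma$ by the subgroup $\Gamma' \subset \Gamma$ generated by $\Sigma'$: then $\Gamma'$ is finitely generated, $\rk(\Gamma') \le \rk(\Gamma)$, and $\Sigma' \subset (C(F)-P_0) \cap \Gamma'$. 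The data $(C,P_0,\Gamma')$ then descends to a subfield $F_0 \subset F$ that is finitely generated over $\IQ$, since $C$, $P_0$, and any finite generating set for $\Gamma'$ involve only finitely many elements of $F$. Replacing $F$ by $F_0$ and $\Gamma$ by $\Gamma'$, I may assume throughout that $F$ is finitely generated over $\IQ$ and $\Gamma$ is a finitely generated abelian group.

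\medskip

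Next, write $F = \IQ(V)$ for an irreducible $\IQ$-variety $V$, and after shrinking $V$ to a dense open subvariety, spread the data out to a smooth proper family $\cC \to V$ of genus-$g$ curves, a section $\cP_0 \in \cC(V)$ extending $P_0$, sections $Q_1,\ldots,Q_n \in \cC(V)$ whose differences $Q_j - \cP_0$ enumerate $\Sigma'$, and sections $\gamma_1,\ldots,\gamma_r \in \jac{\cC/V}(V)$ generating $\Gamma$. Masser's specialization theorem then furnishes a point $v \in V(\IQbar)$ at which the specialization homomorphism $\sigma_v \colon \jac{\cC/V}(V) \to \jac{\cC_v}(\IQbar)$ is injective on the finitely generated subgroup $\Gamma$; when $\dim V > 1$ this is obtained by iterating the one-dimensional case, successively cutting $V$ down by general codimension-one subvarieties through the relevant generic point and verifying at each step that the restriction of sections remains injective on the relevant finitely generated subgroup. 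In particular $\rk(\sigma_v(\Gamma)) = \rk(\Gamma)$, and the images $Q_{j,v} - \cP_{0,v}$ are pairwise distinct elements of $(\cC_v(\IQbar) - \cP_{0,v}) \cap \sigma_v(\Gamma)$.

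\medskip

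Finally, the assumed $\IQbar$-case of Conjecture~\ref{ConjMazur}, applied to the smooth genus-$g$ curve $\cC_v$, the point $\cP_{0,v}\in \cC_v(\IQbar)$, and the subgroup $\sigma_v(\Gamma) \subset \jac{\cC_v}(\IQbar)$ of rank $\rk(\Gamma)$, gives
\[
\#\Sigma' \le \#\bigl((\cC_v(\IQbar) - \cP_{0,v}) \cap \sigma_v(\Gamma)\bigr) \le c(g)^{1+\rk(\sigma_v(\Gamma))} \le c(g)^{1+\rk(\Gamma)}.
\]
Since $\Sigma' \subset \Sigma$ was an arbitrary finite subset, this proves \eqref{EqBoundMazur} with the same constant $c(g)$ as in the $\IQbar$ case. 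The main obstacle I anticipate is verifying that Masser's specialization theorem, classically stated over a one-dimensional base, indeed propagates to the arbitrary-dimensional base $V$ arising here: the iterative dimensional-reduction above is standard, but one must ensure at each step that the restriction/specialization stays injective on the full finitely generated subgroup $\Gamma$ rather than collapsing to a proper subgroup, which is the delicate point.
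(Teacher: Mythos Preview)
Your proof is correct and follows essentially the same route as the paper: reduce to a finitely generated subgroup, spread the data out over an irreducible base variety, and invoke Masser's specialization theorem to find a $\IQbar$-point where the specialization map is injective on that subgroup, then apply the $\IQbar$-case of the conjecture. The only cosmetic differences are that the paper filters $\Gamma$ by the finitely generated groups $\frac{1}{n}\Gamma_0$ rather than by subgroups generated by finite subsets of $(C(F)-P_0)\cap\Gamma$, and that it handles the higher-dimensional base by a direct appeal to the Main Theorem and Scholium~1 of \cite{masser1989specializations} rather than the iterative slicing you outline.
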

\begin{proof}
Without loss of generality we may and do assume that $F = \overline{F}$.

Let $C$, $P_0 \in C(F)$, and $\Gamma$ be as in
Conjecture~\ref{ConjMazur} of rank $\rho$. By the definition of a finite rank group,
there exists a finitely generated subgroup $\Gamma_0$ of
$\mathrm{Jac}(C)(F)$ with rank $\rho$  such that
\[
\Gamma \subset \{x \in \mathrm{Jac}(C)(F) : [n]x \in \Gamma_0\text{ for some }n \in \IN\}.
\]
%The group $\Gamma_0$ has rank $\rho$.

For each $n \in \IN$, define
\[
\frac{1}{n}\Gamma_0 := \{ x \in \mathrm{Jac}(C)(F) : [n]x \in \Gamma_0\}.
\]
Then $\frac{1}{n}\Gamma_0$ is again a finitely generated subgroup of $\mathrm{Jac}(C)(F)$ of rank $\rho$. Note that  $\{\frac{1}{n}\Gamma_0\}_{n \in \IN}$ is a filtered system and 
$\Gamma \subset \bigcup_{n\in \IN}\frac{1}{n}\Gamma_0$. So in order to
prove the desired bound \eqref{EqBoundMazur}, it suffices to prove
that  there exists a constant $c = c(g) > 0$ such that
\begin{equation}
  \label{EqBoundLeveln}
  \#(C(F)-P_0) \cap \frac{1}{n}\Gamma_0 \le c^{1+\rho}
\end{equation}
for each $n \in \IN$.

Let $n\in\IN$ and let $\gamma_1,\ldots,\gamma_{r} \in
\mathrm{Jac}(C)(F)$ be generators of $\frac{1}{n}\Gamma_0$
such that $\gamma_{\rho+1},\ldots,\gamma_{r}$ are torsion points, we
allow $r$ to depend on $n$.
There exists a field $K_n$, finitely generated over $\IQbar$, such that
$C$, $P_0$, and
the $\gamma_1,\ldots,\gamma_{r}$ are
defined over $K_n$. Then $K_n$ is the function field of
some regular, irreducible quasi-projective
variety $V_n$ defined over $\IQbar$.

% If $V_n$ is a point, then $K_n=\IQbar$ and
% (\ref{EqBoundLeveln}) follows from the case $F=\IQbar$ which holds
% true as Conjecture~\ref{ConjMazur} is true by hypothesis. So we may assume $\dim V_n\ge 1$. 

Up to replacing $V_n$ by a Zariski open dense subset, $C$ extends to a
 smooth family $\mathfrak{C} \rightarrow V_n$ (\textit{i.e.}, $C$ is the generic
fiber of $\mathfrak{C} \rightarrow V_n$) with each fiber being a smooth
curve of genus $g$, the point $P_0$ extends to a section of
$\mathfrak{C} \rightarrow V_n$, and
$\gamma_1,\ldots,\gamma_{r}$ extend to
sections of the relative Jacobian
% $\sigma_1,\ldots,\sigma_{\rho},\ldots,\sigma_{r_n}$ of
$\mathrm{Jac}(\mathfrak{C}/V_n) \rightarrow V_n$.
We retain the symbols $P_0,\gamma_1,\ldots,\gamma_{r}$ for these
sections.

Let $v\in V_n(\IQbar)$.
We may specialize $\gamma_1,\ldots,\gamma_{r}$
at $v$ and obtain
 elements $\gamma_1(v),\ldots, \gamma_{r}(v)$ of the fiber
 $\mathrm{Jac}(\mathfrak{C}/V_n)_v$ above $v$.
%It is known that $\mathrm{Jac}(\mathfrak{C}/V_n)_v = \mathrm{Jac}(\mathfrak{C}_v)$.

We define the specialization of $\frac{1}{n}\Gamma_0$ at $v$, which we
denote with $(\frac{1}{n}\Gamma_0)_v$, to be the subgroup of
$\mathrm{Jac}(\mathfrak{C}/V_n)_v(\IQbar) =
\mathrm{Jac}(\mathfrak{C}_v)(\IQbar)$ 
% \philipp{Reference?} \ziyang{I didn't find an explicit reference. But this follows from the definition of (relative) Picard schemes and that Jac=Pic$^0$.}
generated by
$\gamma_1(v),\ldots,\gamma_{r}(v)$. Note that
$\rk(\frac{1}{n}\Gamma_0)_v\le \rho$.

Suppose $\dim V_n\ge 1$, 
by \cite[Main Theorem and Scholium~1]{masser1989specializations}  there exists 
$v \in V_n(\IQbar)$
such that the specialization homomorphism $\frac{1}{n}\Gamma_0 \rightarrow
(\frac{1}{n}\Gamma_0)_v$  is injective.
If $\dim V_n=0$, then $V_n$ is a point $\{v\}$ and $K_n=\IQbar$. Here
specialization is the identity and the same conclusion holds.
 Thus if we denote by
$\mathfrak{C}_v - P_0(v)$ the curve in
$\mathrm{Jac}(\mathfrak{C}_v) = \mathrm{Jac}(\mathfrak{C}/V_n)_v$
obtained via the Abel--Jacobi map based at $P_0(v)$, the
specialization of $P_0$ at $v$, then we have
\[
\#(C-P_0)(F) \cap \frac{1}{n}\Gamma_0 \le \#\big(\mathfrak{C}_v - P_0(v) \big)(\IQbar) \cap \left(\frac{1}{n}\Gamma_0 \right)_v.
\]

By hypothesis, Conjecture~\ref{ConjMazur} holds true for $F = \IQbar$.
So the right-hand side of the
inequality above has an upper bound $c^{1+\rho}$ for some $c = c(g)
\ge 1$. Observe that this bound is independent of $n$. Thus we have established \eqref{EqBoundLeveln}. 
\end{proof}

\begin{proof}[Proof of Theorem~\ref{MainThm}]
  By hypothesis and Theorem~\ref{thm:mainthmQbarcase} Conjecture~\ref{ConjMazur} holds for
  $F=\IQbar$. So it suffices to apply Lemma~\ref{lem:mazurspecialization}.
\end{proof}

\section{Relative Bogomolov for isotrivial abelian schemes}
In this section, we prove that the relative Bogomolov conjecture holds
true for isotrivial abelian schemes as a consequence of S.~Zhang's
Theorem \cite{ZhangEquidist}. An abelian scheme $\cA\rightarrow S$ defined over $\IQbar$ is said to be isotrivial if
  there exists a finite and surjective morphism $S'\rightarrow S$ with
  $S'$ irreducible  such
  that $\cA\times_S S'$ is isomorphic to $A\times S'$ with $A$ an
  abelian variety defined over $\IQbar$.

Let $\cA \rightarrow S$ and $\cL$ be as above
Conjecture~\ref{ConjRelBog}.
%\philipp{We should recall what  isotrivial means, suggestion: We say that $\cA$ is isotrivial if  there exists a finite  morphism $S'\rightarrow S$ with  $S'$ irreducible  such  that $\cA\times_S S'$ is isomorphic to $A\times S'$ with $A$ an  abelian variety defined over $\IQbar$. (I don't know if it is the  ``right'' definition. Should it be finite+flat, or finite+dominant,  or finite+\'etale?)}

\begin{proposition}
Conjecture~\ref{ConjRelBog} holds true if $\cA \rightarrow S$ is isotrivial.
\end{proposition}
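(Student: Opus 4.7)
The plan is to reduce the statement to the classical Bogomolov theorem of Ullmo~\cite{Ullmo} and S.~Zhang~\cite{ZhangEquidist}, applied to the projection of $X$ onto the constant fiber $A$. By the definition of isotriviality, I would start by choosing a finite surjective morphism $\psi\colon S' \to S$ with $S'$ irreducible, together with an isomorphism $\phi\colon \cA \times_S S' \xrightarrow{\sim} A \times S'$ of abelian schemes over $S'$, where $A$ is an abelian variety defined over $\IQbar$. Setting $X' := X \times_S S'$ and using $\phi$ to view $X'$ as a closed subvariety of $A \times S'$, I would pick an irreducible component $X'_0$ of $X'$ whose finite map to $X$ is surjective; a dimension argument then shows that $X'_0$ also dominates $S'$, that $\dim X'_0 = \dim X$, and that the geometric generic fiber of $X'_0 \to S'$ coincides (via $\phi$) with $X_{\overline{\eta}}$. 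Let $p\colon A \times S' \to A$ be the first projection and set $Y := \overline{p(X'_0)} \subset A$; then $Y$ is irreducible, and the codimension hypothesis $\codim_{\cA} X > \dim S$ yields $\dim Y \le \dim X < g = \dim A$, so $Y$ is a proper subvariety of $A$.

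The crucial geometric input is that $Y$ is not a torsion coset of any abelian subvariety of $A$. Indeed, if $Y \subseteq t_0 + B$ with $t_0$ a torsion point of $A$ and $B \subsetneq A$ an abelian subvariety, then $X'_0 \subseteq (t_0 + B) \times S'$, so the geometric generic fiber $X'_{0,\overline{\eta'}}$ sits inside the proper algebraic subgroup $B_{\overline{k(\eta')}} + \langle t_0 \rangle$ of $A_{\overline{k(\eta')}}$. Transporting this containment back to $\cA_{\overline{\eta}}$ through $\phi$ and the identification $\overline{k(\eta)} = \overline{k(\eta')}$ places $X_{\overline{\eta}}$ inside a proper algebraic subgroup of $\cA_{\overline{\eta}}$, contradicting the hypothesis of Conjecture~\ref{ConjRelBog}.

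The remaining task is to transfer heights and to invoke classical Bogomolov. Let $\cL'$ denote the pullback of $\cL$ to $A \times S'$; for each $s' \in S'(\IQbar)$ the restriction $\cL'|_{A \times \{s'\}}$ is a symmetric ample line bundle on $A$, and its class in $\mathrm{NS}(A)$ does not depend on $s'$ because $S'$ is connected and $\mathrm{NS}(A)$ is discrete. Two symmetric line bundles on $A$ in a common Néron--Severi class differ by a $[-1]^*$-fixed element of $\mathrm{Pic}^0(A)$, hence by a $2$-torsion line bundle, which has identically zero Néron--Tate height; consequently the fiberwise heights $\hat{h}_{\cL'|_{A \times \{s'\}}}$ coincide with a single quadratic form $\hat{h}_M$ on $A(\IQbar)$, so that $\hat{h}_{\cL}(P) = \hat{h}_M(p(P'))$ whenever $P' \in X'_0(\IQbar)$ lies above $P \in X(\IQbar)$. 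Classical Bogomolov applied to $Y$ then furnishes an $\epsilon > 0$ such that $\{y \in Y(\IQbar) : \hat{h}_M(y) \le \epsilon\}$ is not Zariski dense in $Y$; its preimage under $p$ cuts $X'_0$ in a proper Zariski closed subset containing $X'_0(\epsilon;\cL')$, and pushing that subset down along the finite surjection $X'_0 \to X$ preserves properness because $\dim X'_0 = \dim X$, giving a proper Zariski closed subset of $X$ that contains $X(\epsilon;\cL)$. The hardest step is the second paragraph: pulling the algebraic-subgroup non-degeneracy of $X_{\overline{\eta}}$ through $\phi$ so that it survives projection to the constant fiber $A$ as the required non-torsion-coset property of $Y$.
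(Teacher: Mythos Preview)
Your proof is correct and follows essentially the same route as the paper: reduce to a constant abelian scheme $A\times S'$ after a finite base change, project to $A$, and invoke the classical Bogomolov theorem on the image $Y$, using the codimension hypothesis to force $\dim Y<g$ and the non-degeneracy of $X_{\overline\eta}$ to rule out $Y$ being a torsion coset. The only notable difference is in the height comparison: you argue that the N\'eron--Severi class of $\cL'|_{A\times\{s'\}}$ is constant in $s'$ and deduce an exact equality $\hat h_{\cL}(P)=\hat h_M(p(P'))$, whereas the paper fixes an auxiliary ample symmetric $L$ on $A$, uses relative ampleness of $\cL$ to obtain $N\hat h_{\cL}\ge \hat h_{p^*L}$, and then argues by contradiction that $Y=A$; both reach the same conclusion.
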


\begin{proof}
  Let $X$ be an irreducible subvariety of $\cA$ defined over $\IQbar$
  that dominates $S$ such that $X_{\overline{\eta}}$ is irreducible
  and  not contained in any proper algebraic subgroup of $\cA_{\overline{\eta}}$; here $X_{\overline{\eta}}$ means the geometric generic fiber of $X$ and $\cA_{\overline{\eta}}$ means the geometric generic fiber of $\cA$. Assume
\begin{equation}\label{EqRBGIsotrivial}
\codim_{\cA}X > \dim S.
\end{equation}

\noindent\boxed{\text{Case: Trivial abelian scheme}} We start by proving the proposition when $\cA \rightarrow S$ is a trivial abelian scheme, \textit{i.e.}, $\cA = A \times S$ for some abelian variety $A$ over $\IQbar$.

Denote by $p \colon \cA = A \times S \rightarrow A$ the natural
projection. Let $L$ be an ample and symmetric line bundle on $A$ defined over
$\IQbar$. For simplicity  denote by $Y =
\overline{p(X)}^{\mathrm{Zar}}$. Then $\dim Y \le \dim X$, 
and $Y$ is not contained in any proper algebraic subgroup of $A$ by
our assumption on $X$.

Assume that for all $\epsilon > 0$, the set
\[
X(\epsilon; \cL) = \{x \in X(\IQbar) : \hat{h}_{\cL}(x) \le \epsilon \}
\]
is Zariski dense in $X$. We will prove a contradiction to \eqref{EqRBGIsotrivial}.%, \textit{i.e.}, to prove that $\codim_{\cA}X \le \dim S$, or equivalently
%\begin{equation}\label{EqRBGIsotrivial}
%\dim X \ge \dim \cA - \dim S.
%\end{equation}

Since $\cL$ is relatively ample on $\cA \rightarrow S$, there exists
an integer $N \ge 1$ such that $\cL^{\otimes N} \otimes
(p^*L)^{\otimes -1}$ is relatively ample on $\cA \rightarrow S$. For
the N\'eron--Tate height functions on $\cA(\IQbar)$ we have $N\hat{h}_{\cL} \ge \hat{h}_{p^*L}$.

Take any $\epsilon > 0$. 
If $x \in X(\epsilon; \cL)$, then $\hat{h}_{\cL}(x) \le \epsilon$, and hence
\[
\hat{h}_L(p(x)) = \hat{h}_{p^*L}(x) \le N\hat{h}_{\cL}(x) \le N\epsilon.
\]
Letting $x$ run over elements in $X(\epsilon; \cL)$,  we then obtain
\begin{equation}\label{EqSmallPointsInclusion}
p(X(\epsilon ; \cL)) \subseteq Y(N\epsilon;L) := \{ y \in Y(\IQbar) : \hat{h}_L(y) \le N\epsilon\}.
\end{equation}

We have assumed $\overline{X(\epsilon ; \cL)}^{\mathrm{Zar}} = X$.
Applying $p$ to both sides and taking the Zariski closure, we get $\overline{p(X(\epsilon ;
  \cL))}^{\mathrm{Zar}} = Y$. Hence
$\overline{Y(N\epsilon;L)}^{\mathrm{Zar}} = Y$ by
\eqref{EqSmallPointsInclusion}. Recall that $Y$ is not contained in
any proper algebraic subgroup of $A$. As $N\epsilon$ runs over all positive real numbers, the classicial Bogomolov conjecture, proved by S.~Zhang \cite{ZhangEquidist}, implies that $Y = A$.

So $\dim X \ge  \dim Y = \dim A = \dim \cA - \dim S$, and thus $\codim_{\cA}X \le \dim S$. This contradicts \eqref{EqRBGIsotrivial}. Hence we are done in this case.

\noindent\boxed{\text{Case: General isotrivial abelian scheme}} Now we go back to an arbitrary isotrivial abelian scheme $\cA \rightarrow S$.

There exists a finite and surjective morphism 
$\rho \colon S' \rightarrow S$, with $S'$ irreducible, such that the base change $\cA' :=
\cA\times_S S' \rightarrow S'$ is a trivial abelian scheme. Denote by
$\rho_{\cA} \colon \cA' \rightarrow \cA$ the natural projection.
Then $\rho_{\cA}$ is finite and surjective, so $\dim\cA'=\dim\cA$. Moreover, there is 
 an irreducible component $X'$ of $\rho_{\cA}^{-1}(X)$ with
$\rho_{\cA}(X')=X$ and  $\dim X' = \dim X$.
% , which
% is a finite morphism.
%\philipp{Small change here:} 
% Note that $\rho_{\cA}^{-1}(X)$ is
% equidimensional of dimension $\dim X$.
% Let $X'$ be an irreducible component of $\rho_{\cA}^{-1}(X)$, then
% $\rho_{\cA}(X') = X$.
So $X'$ dominates $S'$ and $X'_{\overline{\eta}}$ is irreducible, as
$X_{\overline{\eta}}$ is. Moreover, $X'_{\overline{\eta}}$ is not
contained in any proper algebraic subgroup of $\cA'_{\overline{\eta}}
= \cA_{\overline{\eta}}$, and $\codim_{\cA'}X' = \codim_{\cA} X > \dim
S = \dim S'$. Finally, $\rho_{\cA}^*\cL$ is relatively ample on $\cA'
\rightarrow S'$.

We have proved the relative Bogomolov conjecture for the trivial
abelian scheme $\cA' \rightarrow S'$. So there exists $\epsilon > 0$
such that
\[
X'(\epsilon; \rho_{\cA}^*\cL) = \{x' \in X'(\IQbar) : \hat{h}_{\rho_{\cA}^*\cL}(x') \le \epsilon\}
\]
is not Zariski dense in $X'$. In particular
\begin{equation}\label{EqRelBogFiniteCover}
\dim \overline{X'(\epsilon;\rho_{\cA}^*\cL)}^{\mathrm{Zar}} < \dim X' = \dim X.
\end{equation}

It is not hard to check $\rho_{\cA}(X'(\epsilon;\rho_{\cA}^*\cL)) =
X(\epsilon; \cL)$ using $ \hat{h}_{\rho_{\cA}^*\cL}(x') =
\hat{h}_{\cL}(\rho_{\cA}(x'))$ and $\rho_{\cA'}(X') = X$. Therefore,
and 
as $\rho_{\cA}$ is a closed morphism, 
\[
\rho_{\cA}(\overline{X'(\epsilon;\rho_{\cA}^*\cL)}^{\mathrm{Zar}}) = \overline{\rho_{\cA}(X'(\epsilon;\rho_{\cA}^*\cL))}^{\mathrm{Zar}}= \overline{X(\epsilon; \cL)}^{\mathrm{Zar}}.
\]
So we have $\dim \overline{X(\epsilon; \cL)}^{\mathrm{Zar}} =
\dim\rho_{\cA}(\overline{X'(\epsilon;\rho_{\cA}^*\cL)}^{\mathrm{Zar}})
= \dim \overline{X'(\epsilon;\rho_{\cA}^*\cL)}^{\mathrm{Zar}}$ because
$\rho_\cA$ is finite. By \eqref{EqRelBogFiniteCover} we then have $\dim \overline{X(\epsilon; \cL)}^{\mathrm{Zar}} < \dim X$. Hence $X(\epsilon;\cL)$ is not Zariski dense in $X$. We are done.
\end{proof}

\bibliographystyle{alpha}
\bibliography{literature}

\end{document}